\title{Amalgamated algebras along an ideal}
\abstract{{Let $f:A \rightarrow B$ be a ring homomorphism and  $J$   an ideal of $B$. In this paper, we  initiate
a systematic study of a new ring construction called the ``amalgamation of $A$ with $B$ along $J$   with respect
to $f$''.  This construction finds its roots in a paper by J.L. Dorroh appeared in 1932 and   provides a general
frame for studying  the amalgamated duplication of a ring   along an ideal, introduced and studied by D'Anna and
Fontana in 2007, and  other classical constructions such  as the $A+ XB[X]$ and $A+  XB[\![X]\!]$  constructions,
the CPI-extensions of Boisen and Sheldon,  the $D+M$ constructions and the Nagata's idealization.}}
\keywords{{Nagata's idealization, Pullback, $D+M$ construction, amalgamated duplication}}
\newcommand{\ec}{\color{black}}%
\newfont{\rams}{msbm10 scaled\magstep1}
\newfont{\ramss}{msbm10 scaled\magstep0}
\newfont{\iams}{msbm10}
\newfont{\gotic}{eufm10 scaled\magstep1}
\newfont{\bellap}{eusm10 scaled\magstep1}
\newcommand{\inte}{\mbox{\rams \symbol{'132}}}
\newcommand{\q}{{J}}
\newcommand{\aaa}{{I}}
\newcommand{\da}{{A\!\Join^f\!\!J}}
\newcommand{\Jac}{{\rm Jac}}%
\newcommand{\Ker}{{\rm Ker}}
\newcommand{\tot}{{\rm Tot}}
\newcommand{\z}{{\ldots}}
\newcommand{\w}{{\setminus}}
\newcommand{\ude}{\mbox{\rm{id}}} %
\newcommand{\udes}{\mbox{\rm{\tiny id}}} %
\begin{document}


\section{Introduction}

 Let $A$ and $B$ be commutative rings with unity, let $J$ ba an ideal of $B$ and let $f:A\longrightarrow B$ be a
ring homomorphism. In this setting, we can define the following subring of $A\times B$:
$$
A\Join^f\! J=\{(a,f(a)+j) \mid  a \in A, \ j \in J \}
$$
called {\it the amalgamation of $A$ with $B$ along $J$ with respect to $f$}. This construction is a generalization
of the amalgamated duplication of a ring along an ideal (introduced and studied in \cite{do1} and \cite{do2}).
Moreover, other classical constructions (such as the $A+XB[X]$ construction, the $D+M$ construction and the
Nagata's idealization) can be studied as particular cases of the amalgamation.

On the other hand, the amalgamation $A\Join^f\! J$ is related to
a construction proposed by D.D. Anderson in  \cite{a-06} and motivated by a classical construction due to Dorroh
\cite{do1}, concerning the embedding {of} a ring without identity in a ring with identity.

The level of generality that we have choosen is due to the fact that the amalgamation can be studied in the frame
of  pullback constructions. This point of view allows us to provide easily an ample description of the properties
of $\da$, in connection with the properties of $A$, $J$ and $f$.

In this paper,  we  begin a study of  the basic properties of $\da$. In particular, in Section 2, we present all
the constructions cited above as particular cases of the amalgamation. Moreover, we show that the CPI extensions
(in the sense of Boisen and Sheldon \cite{bo}) are related to amalgamations of a special type and we compare
Nagata's idealization with the amalgamation. In Section 3, we consider the iteration of the amalgamation process,
giving some geometrical applications of it.

In the last two sections, we show that the amalgamation can be realized as a pullback and we characterize those
pullbacks that arise from an amalgamation (Proposition \ref{fibret}. {Finally} we apply these results to
study the basic algebraic properties of the amalgamation, with particular attention to the finiteness conditions.



\ec\section{The genesis}

Let $A$ be a commutative ring with identity and let  $\mathcal{R}$   be a  ring without identity which is an
$A$-module. Following the construction described  by D.D. Anderson in  \cite{a-06},  we can define a
multiplicative structure  in the $A$--module  $A\oplus \mathcal{R}$,  by setting $(a,x)(a',x')
 := (aa',a x'+a'x+xx')$, for all    $a, a' \in A$
and $x, x' \in \mathcal{R}$.   We denote by $A {\boldsymbol {\dot{\oplus}}}\mathcal{R}  $ the
direct sum  $A\!\oplus\! \mathcal{R} $ endowed  also with the multiplication
defined above.
\smallskip

The following properties are easy to   check.

\begin{lemma} { \rm{\cite[Theorem 2.1]{a-06}}} \label{le:1} With the notation
introduced above, we have:
\begin{itemize}
  \item[\rm (1)] $A {\boldsymbol {\dot {\oplus}}}   \mathcal{R} $ is a ring with identity $(1, 0)$, which has an $A$--algebra structure induced by the canonical ring embedding $ \iota_A:  A \hookrightarrow  A {\boldsymbol {\dot {\oplus}}}   \mathcal{R}$,  defined by    $a \mapsto (a, 0)$  for all $a \in A$.
    \item[\rm (2)] If we identify $\mathcal{R}$ with  its canonical image $(0) \times \mathcal{R} $
  under the canonical embedding
  $\iota_{\mathcal{R}}: \mathcal{R}  \hookrightarrow A {\boldsymbol {\dot {\oplus}}}   \mathcal{R} $,
  defined by  $x \mapsto (0, x)$, for all $x \in  \mathcal{R} $,
  then $  \mathcal{R} $ becomes an ideal in $A {\boldsymbol {\dot {\oplus}}}    \mathcal{R} $.

 \item[\rm(3)] If we identify $A$ with  $A\times (0)$ (respectively,
 $  \mathcal{R} $ with $(0) \times  \mathcal{R}$) inside $A {\boldsymbol {\dot {\oplus}}}    \mathcal{R} $,
 then the ring $A {\boldsymbol {\dot {\oplus}}}   \mathcal{R} $
 is an $A$--module generated by $(1, 0)$ and $ \mathcal{R} $, i.e.,
    $A(1,0) +   \mathcal{R} =  A {\boldsymbol {\dot {\oplus}}}    \mathcal{R} $. Moreover,
    if $p_A: A {\boldsymbol {\dot {\oplus}}}    \mathcal{R} \twoheadrightarrow A$ is the canonical
projection  (defined by $(a, x) \mapsto a$ for all $a\in A$ and $x \in   \mathcal{R}$),   then
$$
0 \rightarrow \mathcal{R}  \xrightarrow { \iota_{_{\!\mathcal{R} }}} A {\boldsymbol {\dot {\oplus}}}   \mathcal{R}
\xrightarrow {p_{_{\!A}}} A \rightarrow 0$$  is a splitting {exact} sequence of $A$--modules.  \hfill $\Box$
\end{itemize}
\end{lemma}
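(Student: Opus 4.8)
The plan is to verify all three statements by direct computation from the definition of the multiplication on $A \oplus \mathcal{R}$, since the additive structure is simply that of the direct sum of $A$-modules and poses no difficulty. The one structural input I would isolate at the outset is the compatibility between the $A$-module action on $\mathcal{R}$ and its (non-unital, commutative) ring multiplication, namely $a(xx') = (ax)x' = x(ax')$ for all $a \in A$ and $x, x' \in \mathcal{R}$; this is exactly the assertion that $\mathcal{R}$ is an $A$-algebra, and every associativity-type identity below rests on it.

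For (1), I would first check that $(1,0)$ is a two-sided identity, since $(1,0)(a',x') = (a',\, x' + a'\cdot 0 + 0\cdot x') = (a',x')$. Commutativity follows at once from the commutativity of $A$ and of $\mathcal{R}$ together with the symmetry of the cross-term $ax'+a'x$. The only genuinely nontrivial point — and the main obstacle, such as it is — is associativity: I would expand both $[(a,x)(a',x')](a'',x'')$ and $(a,x)[(a',x')(a'',x'')]$, observe that the first coordinates agree by associativity in $A$, and check that the second coordinates both reduce to $aa'x'' + aa''x' + a'a''x + a''xx' + ax'x'' + a'xx'' + xx'x''$. Matching these two expansions uses precisely the associativity of $\mathcal{R}$ (for the term $xx'x''$) and the compatibility relation noted above (for the mixed terms such as $a''xx'$ and $ax'x''$). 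Distributivity is a routine bilinearity check. Finally, $\iota_A$ is a ring homomorphism because $(a,0)(a',0) = (aa',0)$ and $\iota_A(1) = (1,0)$, and it is injective, which gives the claimed $A$-algebra structure.

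For (2), taking an arbitrary $(a,x) \in A {\boldsymbol {\dot {\oplus}}} \mathcal{R}$ and $(0,y) \in (0)\times \mathcal{R}$, I would compute $(a,x)(0,y) = (0,\, ay + xy) \in (0) \times \mathcal{R}$, which together with closure under addition shows that $(0)\times \mathcal{R}$ is an ideal; that $\iota_{\mathcal{R}}$ is an injective homomorphism of rings without identity onto it is then immediate. For (3), the $A$-module action induced by $\iota_A$ is $a \cdot (b,y) = (a,0)(b,y) = (ab,\, ay)$, so $a \cdot (1,0) = (a,0)$ and hence $(a,x) = a\cdot(1,0) + (0,x)$, proving $A(1,0) + \mathcal{R} = A {\boldsymbol {\dot {\oplus}}} \mathcal{R}$. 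For the exact sequence I would note that $p_A$ is a surjective $A$-module map with $\Ker p_A = (0)\times\mathcal{R} = \Ima \iota_{\mathcal{R}}$, and that $\iota_A$ is an $A$-linear section since $p_A \circ \iota_A = \ude_A$; this simultaneously establishes exactness and the splitting.
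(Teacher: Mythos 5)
Your proof is correct. The paper gives no argument of its own for this lemma---it cites Anderson's Theorem 2.1 and declares the properties ``easy to check''---and your direct verification is exactly that routine check, carried out completely; indeed it improves on the paper's phrasing by isolating the compatibility $a(xx') = (ax)x' = x(ax')$ between the $A$--module structure and the multiplication of $\mathcal{R}$, which is the implicit hypothesis on which associativity (the only nontrivial axiom) actually rests.
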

\noindent
\begin{remark} (1)
 The previous construction  takes its roots in the classical construction, introduced   by Dorroh \cite{do1} in 1932, for embedding a ring   (with or without identity, possibly without regular elements) in a ring with identity (see also Jacobson \cite{j}, page 155). For completeness, we recall Dorroh's construction starting with a case which is not the motivating one, but that leads naturally  to the relevant one (Case 2).

{\bf Case 1}. Let $R$ be a commutative ring (with or without identity) and let ${\rm Tot}(R)$ be its total ring of fractions, i.e., $\tot(R) := N^{-1} R$, where $N$ is the set of regular elements of $R$.  If we assume that $R$ has a regular element $r$, then it is easy to see that   $R \subseteq  {\rm Tot}(R)$, and ${\rm Tot}(R)$ has identity $1:=\frac{r}{r}$,  even if $R$ does not.   In this situation we can consider $R[1]:=\{x+ m\cdot 1  \mid x\in R, m \in\mathbb Z\}$.   Obviously, if $ R$ has an identity,  then $R=R[1]$; otherwise, we have that $R[1]$ is a commutative ring with identity,   which contains properly $R$ and it  is the smallest subring of $\tot(R)$ containing $R$ and $1$.  It is easy to see that:
\begin{enumerate}
 \item[(a)]
 $R$ and $R[1]$ have the same  characteristic,
 \item[(b)]    $R$ is an ideal of $R[1]$ \,  and
 \item[(c)]   if $R\subsetneq R[1]$, then   the quotient-ring  $R[1]/R$ is canonically isomorphic to   $\mathbb Z/n\mathbb Z$,  where $n \ (\geq 0)$ is the characteristic of $R[1]$ (or, equivalently, of $R$).
\end{enumerate}

{\bf Case 2}. Let $R$ be a commutative ring (with or without identity) and, possibly, without regular elements. In
this situation, we possibly have   $R={\rm Tot}(R)$, so we cannot perform the previous construction. Following
Dorroh's ideas, we can consider in any case $R$ as a $\mathbb Z$-module and, with the notation introduced at the
beginning of this section,  we can construct the ring $\mathbb Z {\boldsymbol {\dot {\oplus}}}  R$, that we denote
by ${\mathtt {Dh}}(R)$ in Dorroh's honour. Note that ${\mathtt {Dh}}(R)$ is a commutative ring with identity
$1_{{\mathtt {Dh}}(R)} :=(1,0)$.   If we identify, as usual, $R$ with its canonical image in ${\mathtt {Dh}}(R)$,
then $R$ is an ideal of ${\mathtt {Dh}}(R)$ and ${\mathtt {Dh}}(R)$ has a kind of minimal property over $R$, since
${\mathtt {Dh}}(R)=\mathbb Z(1,0)+R$. Moreover, the quotient-ring ${\mathtt {Dh}}(R)/R$ is naturally isomorphic to
$\mathbb Z$.

On the bad side, note that if $R$ has an identity $1_R$, then the canonical embedding of $R$  into ${\mathtt
{Dh}}(R)$ (defined by $x \mapsto (0, x)$ for all $x \in R)$ does not preserve the identity, since $(0,1_R)\neq
1_{{\rm Dh}(R)}$. Moreover, in  any case (whenever $R$ is  a ring with or without identity), the canonical
embedding $R \hookrightarrow {\mathtt {Dh}}(R)$ may not preserve the characteristic.

In order to overcome this difficult, in 1935, Dorroh \cite{do2} gave a variation of  the previous  construction.
More precisely, if $R$ has positive characterisitic $n$, {{then}} $R$ can be considered  as  a $\mathbb Z/n\mathbb
Z$-module, so ${\mathtt {Dh}}_n(R):= \left(\mathbb Z/n\mathbb Z\right) {\boldsymbol {\dot {\oplus}}}  R$ is a ring
with identity, ha\-ving  characteristic $n$. Moreover, as above, ${\mathtt {Dh}}_n(R)=\left(\mathbb Z/n\mathbb
Z\right)(1,0)+R$ and ${\mathtt {Dh}}_n(R)/R$ is canonically isomorphic to $\mathbb Z/n\mathbb Z$.

\smallskip

(2) Note that a general version of the Dorroh's construction (previous Case 2) was  considered  in 1974  by Shores
\cite[Definition 6.3]{sh} for constructing examples   of local commutative rings with arbitrarily large Loewy
length.   We are indebted to L. Salce  for pointing out to us  that
 the amalgamated duplication of a ring along an ideal \cite{d'a-f-1} can also  be
 viewed as a special case of Shores construction (cf. also \cite{sa}).
Moreover, before Shores,  Corner in 1969 \cite{co}, for studying endomorphisms rings of Abelian groups, considered
a similar construction called ``split extension of a ring by an ideal''.
\end{remark}

\smallskip

A natural situation in which we can apply the previous general construction  (Lemma \ref{le:1})  is the following.
Let $f:A\rightarrow B$ be a ring homomorphism and let $\q$ be an ideal of $B$.  Note that $f$ induces on $\q$  a
natural structure of $A$--module by setting   $a\!\cdot\! j := f(a)j$,  for all $a \in A$ and $j \in \q$. Then, we
can consider $ {A\boldsymbol {\dot {\oplus}}}  \q$.

The following properties, except (2) that is easy to verify,  follow from Lemma \ref{le:1}.

\begin{lemma} \label{R_E}   With the notation
introduced above, we have:
\begin{itemize}
  \item[\rm(1)] $A {\boldsymbol {\dot {\oplus}}}  \q$ is a ring.
    \item[\rm (2)] The map $f^{\Join}: A {\boldsymbol {\dot {\oplus}}} \q
\rightarrow A \times B$,
    defined by  $(a, j)\mapsto (a, f(a)+j)$ for all $a \in A$ and $j \in J$,  is an injective ring
    homomorphism.
    \item[\rm(3)] The map $\iota_A: A  \rightarrow A
{\boldsymbol{\dot {\oplus}}} \q$ (respectively, $\iota_\q: \q  \rightarrow A
{\boldsymbol{\dot {\oplus}}} \q$),
    defined by $a\mapsto (a,0)$  for all $a \in A$   (respectively,  by $ j \mapsto (0, j)$   for all $j \in J$),   is an injective ring
homomorphism (respectively,  an injective $A$--module
homomorphism).   If we identify  $A$ with $\iota_A(A)$ (respectively,  $\q$ with $\iota_\q(\q)$), then the ring  $A {\boldsymbol {\dot {\oplus}}}  \q$ coincides with $A+\q$.

   \item[\rm(4)]  Let $p_A :  A {\boldsymbol {\dot {\oplus}}}  \q \rightarrow A $ be the canonical projection  (defined by $(a ,j) \mapsto a$ for all $a \in A$ and $j \in J$), then the following is a split exact sequence of $A$--modules:
   $$
   0 \rightarrow \q  \xrightarrow{\iota_J  }A {\boldsymbol {\dot {\oplus}}}  \q \xrightarrow{p_A} A \rightarrow 0\,.  \quad \quad $$
\end{itemize}
\end{lemma}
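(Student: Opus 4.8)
The plan is to deduce parts (1), (3), and (4) directly from Lemma \ref{le:1} by specializing the general construction $A {\boldsymbol {\dot {\oplus}}} \mathcal{R}$ to the case $\mathcal{R} = \q$, where $\q$ carries the $A$-module structure $a\cdot j := f(a)j$; only part (2) requires a separate (easy) verification. So first I would observe that $\q$, being an ideal of $B$, is itself a commutative ring (without identity in general), and that the $A$-module action $a\cdot j = f(a)j$ is compatible with the ring multiplication on $\q$ in the sense required by the genesis construction, namely $a\cdot(jj') = (a\cdot j)j' = j(a\cdot j')$. This compatibility is immediate from associativity and commutativity in $B$ together with the fact that $f$ is a ring homomorphism. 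Granting this, $\q$ is exactly an object of the type $\mathcal{R}$ to which Lemma \ref{le:1} applies.

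With that setup, part (1) is simply Lemma \ref{le:1}(1): $A {\boldsymbol {\dot {\oplus}}} \q$ is a ring with identity $(1,0)$. For part (3), the injectivity of $\iota_A$ and $\iota_\q$ and the fact that $\iota_\q$ is an $A$-module homomorphism come from Lemma \ref{le:1}(1)--(2); the only new content is the identification $A {\boldsymbol {\dot {\oplus}}} \q = A + \q$, which follows from Lemma \ref{le:1}(3), since every element $(a,j)$ can be written as $a(1,0) + (0,j) = \iota_A(a) + \iota_\q(j)$. Part (4) is then Lemma \ref{le:1}(3) verbatim, with $\mathcal{R} = \q$: the canonical projection $p_A$ and the inclusion $\iota_\q$ fit into the stated split short exact sequence of $A$-modules, the splitting being furnished by $\iota_A$.

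The one part needing an independent argument is (2). Here I would directly check that $f^{\Join}(a,j) = (a, f(a)+j)$ is a ring homomorphism from $A {\boldsymbol {\dot {\oplus}}} \q$ into $A \times B$. Additivity and preservation of the identity $(1,0) \mapsto (1, f(1)) = (1,1)$ are routine; the substantive point is multiplicativity. Computing in the source, $(a,j)(a',j') = (aa',\, a\cdot j' + a'\cdot j + jj') = (aa',\, f(a)j' + f(a')j + jj')$, and applying $f^{\Join}$ gives second coordinate $f(aa') + f(a)j' + f(a')j + jj'$; on the other hand the product of the images in $A\times B$ has second coordinate $(f(a)+j)(f(a')+j') = f(a)f(a') + f(a)j' + f(a')j + jj'$. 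These agree precisely because $f(aa') = f(a)f(a')$. Injectivity of $f^{\Join}$ is then clear, since $(a, f(a)+j) = (0,0)$ forces $a = 0$ and hence $j = 0$.

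\emph{Where the difficulty lies.} There is no genuine obstacle here: the lemma is essentially a dictionary translation of Lemma \ref{le:1} into the present notation, and the author explicitly flags (2) as the only part not covered by it. The only point demanding a moment's care is verifying that the $A$-module structure $a\cdot j = f(a)j$ on the ideal $\q$ is the correct one to make the genesis construction applicable and to make $f^{\Join}$ multiplicative; once the identity $f(aa')=f(a)f(a')$ is seen to be exactly the hypothesis that reconciles the multiplication on $A {\boldsymbol {\dot {\oplus}}} \q$ with the componentwise multiplication on $A\times B$, the whole statement falls out.
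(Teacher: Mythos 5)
Your proposal is correct and follows exactly the route the paper takes: the paper itself states that all parts except (2) follow from Lemma \ref{le:1} applied to $\mathcal{R}=\q$ with the $A$-module structure $a\cdot j=f(a)j$, and that (2) is an easy direct verification, which is precisely your decomposition. Your explicit check of multiplicativity of $f^{\Join}$ (reducing to $f(aa')=f(a)f(a')$) and of the module--ring compatibility on $\q$ simply fills in details the paper leaves to the reader.
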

\vskip -28 pt \hfill $\Box$ \noindent \vskip 15pt We set
$$\da:=f^{\Join} (A {\boldsymbol {\dot {\oplus}}}  \q),\quad  \Gamma(f)  :=\{(a,f(a)) \mid  a\in A\}.$$
Clearly,   $\Gamma(f)\subseteq\da$ and they are subrings of $ A\times B$.  The motivation for replacing $A
{\boldsymbol {\dot {\oplus}}}  \q$ with its canonical image $\da$ inside $A\times B$ (under $f^{\Join} $) is
related to the fact that the multiplicative structure defined in $A {\boldsymbol {\dot {\oplus}}}  \q$, which
looks somewhat ``artificial'', becomes the restriction to $\da$ of the natural multiplication defined
componentwise  in the direct product $A\times B$. The ring $\da$ will be called {{\it the amalgamation of $A$
with $B$ along $\q$, with respect to $f:A\rightarrow B$}}.

\begin{example} \label{duplic}
 {\it The amalgamated duplication of a ring.}\\
A particular case of the construction  introduced above is the amalgamated duplication of a ring \cite{d'a-f-1}.
Let $A$ be a commutative ring with unity, and  let $E$ be an $A$--submodule of the total ring of fractions
$\tot(A)$ of $A$ such that $E\cdot E\subseteq E$. In this case, $E$ is an ideal in the subring $B:=(E:E) \
(:=\{z\in \tot(A) \mid zE\subseteq E\})$ of $\tot(A)$. If $ \iota:  A\rightarrow B$ is the  natural embedding,
then $A\!\Join^{\iota}\!\!E$ coincides with  $A\!\Join\!E$,  the amalgamated duplication of $A$ along $E$, as
defined in \cite{d'a-f-1}. A particular and relevant case is when $E :=I$ is an ideal in $A$. In this case, we can
take $B:=A$, we can consider the identity map $\ude := \ude_{A} : A \rightarrow A$ and we have  that $A\!\Join \!
I$, the amalgamated duplication of $A$ along the ideal $I$, coincides with  $A\!\Join^{\udes}\! \! I$, that we
will call also \it the simple amalgamation of $A$ along $I$  \rm(instead of { the amalgamation of $A$ along
$I$, with respect to $\ude_{A}$}).
\end{example}

\begin{example}\label{significativo}
 {\it The constructions  $A+{\boldsymbol X}B[{\boldsymbol X}]$ and $A+{\boldsymbol X}B[\![{\boldsymbol X}]\!]$.}\\
Let $A \subset B$  be an extension of commutative rings and ${\boldsymbol X}:=\{X_1,X_2, \z,X_n\}$ a finite set of
indeterminates over $B$.   In the polynomial ring $B[{\bf X}]$,  we can consider the following subring
$$A+{\boldsymbol X}B[{\boldsymbol X}]:=\{h \in B[{\boldsymbol X}] \mid h({\bf 0})\in A\}\,,$$ where ${\bf 0}$ is
the $n-$tuple whose components are $0$. This is a particular case of the general construction introduced above. In
fact, if $\sigma':A\hookrightarrow B[{\boldsymbol X}]$ is the natural embedding and $\q': ={\boldsymbol
X}B[{\boldsymbol X}]$, then it is easy to check that $A\!\Join^{\sigma'}{\!\!}\q'$ is isomorphic to
$A+{\boldsymbol X}B[{\boldsymbol X}]$
 (see also  the following    Proposition \ref{inizio}(3)).

Similarly, the subring $A+{\boldsymbol X}B[\![{\boldsymbol X}]\!]: =\{h\in B[\![{\boldsymbol X}]\!] \mid h({\bf
0})\in A\}$ of the ring of power series  $B[\![{\boldsymbol X}]\!]$ is isomorphic to
$A\!\Join^{\sigma''}{\!\!}\q''$, where $\sigma'': A\hookrightarrow B[\![{\boldsymbol X}]\!]$ is the  natural
embedding  and $\q'':={\boldsymbol X}B[\![{\boldsymbol X}]\!]$.
\end{example}

\begin{example}\label{D+M} {\it The $D+M$ construction.}\\
Let $M$ be a maximal ideal of a ring (usually, an integral domain) $T$ and let $D$ be a subring of $T$ such that
$M \cap D = (0)$. The ring $D+M := \{x +m \mid x \in D,\ m \in M \}$ is canonically isomorphic to
$D\!\Join^\iota\!\!M$, where $\iota: D \hookrightarrow T$ is the natural embedding.

More generally, let $\{M_\lambda \mid \lambda \in \Lambda\}$ be  a subset of the set of the maximal ideals of $T$,
such that $M_\lambda \cap D = (0)$ for all $\lambda \in \Lambda$, and set $J:= \bigcap_{\lambda \in \Lambda}
M_\lambda$. The ring $D+ J := \{x +j \mid x \in D,\  j \in J \}$ is canonically isomorphic to
$D\!\Join^\iota\!\!J$. In particular, if $D:= K$ is a field contained in $T$ and $J := \Jac(T)$ is the Jacobson
ideal of (the $K$--algebra) $T$, then $K+\Jac(T)$ is canonically isomorphic to   $K\!\Join^\iota\!\!\Jac(T)$,
where $\iota: K \hookrightarrow T$ is the natural embedding.
\end{example}

\begin{example}{\it  The  CPI--extensions  (in the sense of Boisen-Sheldon \cite{bo}).  }\\
Let $A$ be a ring and $P$ be a prime ideal of $A$.  Let $\boldsymbol{k}(P)$ be the residue field of the
localization $A_P$ and denote by  $\psi_P$ (or simply, by $\psi$)  the canonical surjective ring homomorphism
$A_P\longrightarrow \boldsymbol{k}(P)$.   It is wellknown that $\boldsymbol{k}(P)$ is canonically isomorphic to
the  quotient field of $A/P$, so we can identify $A/P$ with its canonical image into $\boldsymbol{k}(P)$.
 Then the subring $ \boldsymbol{C}(A,P)  :=\psi^{-1}(A/P)$ of $A_P$ is called the
 {\it CPI--extension of $A$ with respect to $P$}.
It is immediately seen that, if  we denote by $\lambda_P$ (or, simply, by $\lambda$) the localization homomorphism
$A\longrightarrow A_P$,  then $ \boldsymbol{C}(A,P)  $  coincides with  the ring $\lambda(A)+PA_P$.   On the other
hand,  if $J:=PA_P$,  we can consider $A\Join^{\lambda}J$ and we have the canonical projection  $A\Join^{\lambda}J
\rightarrow \lambda(A)+PA_P$,  defined by $(a, \lambda(a) + j) \mapsto   \lambda(a) + j $, where $a \in A$ and $j
\in PA_P$. It follows that  $ \boldsymbol{C}(A,P)$ is canonically isomorphic to $(A\Join^{\lambda}PA_P)/(P\times
\{0\})$ (Proposition \ref{inizio}(3)).

 More generally, let $I$ be an ideal of $A$ and let  $S_I$   be the set of
the elements $s\in A$ such that $s+I$ is a regular element of $A/I$. Obviously,   $S_I$  is a multiplicative
subset of $A$   and if  $\overline{S_I}$  is its canonical projection onto $A/I$,  then $\tot(A/I) =
(\overline{S_I})^{-1}(A/I) $.  Let  $\varphi_I: S^{-1}A\longrightarrow \tot(A/I)$  be the canonical surjective
ring homomorphism defined by $\varphi_I({a}{s}^{-1}):=(a+I)(s+I)^{-1}$, for all ${a} \in A$ and $s \in S$.   Then,
the subring $ \boldsymbol{C}(A,I):=  \varphi_I^{-1}(A/I) $ of  $S_I^{-1}A$   is called the {\it CPI--extension of
$A$ with respect to $I$}.  If  $\lambda_I: A\longrightarrow S_I^{-1}A$   is the localization homomorphism, then it
is easy to see that $ \boldsymbol{C}(A,I)$ coincides with the ring  $\lambda_I(A)+S_I^{-1}I$.  It will follow by
Proposition \ref{inizio}(3) that,   if we consider the ideal $J:=S_I^{-1}I$ of $S_I^{-1}A$, then $
\boldsymbol{C}(A,I)$ is  canonically  isomorphic to  $(A\Join^{\lambda_I}J)/(\lambda_I^{-1}(J)\times \{0\})$.
\end{example}

\begin{remark}{\it Nagata's idealization.}\\
Let $A$ be a commutative ring and $\mathcal M $ a $A$--module. We recall that, in 1955,
  Nagata introduced the ring extension of $A$ called
 \it  the idealization of $\mathcal M $ in $A$,  \rm
denoted here by $A\ltimes\! \mathcal M $, as the $A$--module
$A\oplus \mathcal M $ endowed with a multiplicative structure defined by:
$$
(a,x)(a',x') := (aa',ax'+a'x)\,, \; \mbox{ for all    $a, a' \in A$
and $x,x' \in \mathcal M$ }
$$
\noindent  (cf. \cite{na}, Nagata's book \cite[page 2]{n}, and Huckaba's book \cite[Chapter VI, Section 25]{h}).
The idealization $A\!\ltimes\! \mathcal M $   is a ring, such that  the canonical embedding $ \iota_A :   A
\hookrightarrow A\!\ltimes\! \mathcal M$ (defined by $a \mapsto (a, 0)$,   for all $a \in A$)   induces  a subring
$A^\ltimes   \ (:= \iota_A(A))$      of $A\! \ltimes \! \mathcal M $  isomorphic to $A$ and the embedding
$\iota_{\mathcal M }: \mathcal M  \hookrightarrow A\!\ltimes\! \mathcal M$  (defined by $x \mapsto (0, x)$,  for
all $x \in \mathcal M $) determines an ideal  $\mathcal M ^\ltimes   \ (:= \iota_{\mathcal M }(\mathcal M ))$   in
$A\!\ltimes\! \mathcal M $ (isomorphic, as an $A$--module, to $\mathcal M $), which is nilpotent of index $2$
(i.e.  ${\mathcal M}^\ltimes\!\cdot\! {\mathcal M }^\ltimes = 0$).

 For the sake of simplicity, we will identify
$\mathcal M $ with ${\mathcal M }^\ltimes$ and $A$ with $A^\ltimes$.
If $p_A: A\!\ltimes\! \mathcal M  \rightarrow A$ is the canonical projection (defined by $(a, x) \mapsto a$,
for all $a \in A$ and $x \in \mathcal M$),  then
$$
0 \rightarrow \mathcal M  \xrightarrow {\iota_{_{\!\mathcal M }} }  A\!\ltimes\! \mathcal M  \xrightarrow {p_{_{\!A}}} A \rightarrow 0
$$
is a spitting exact sequence of $A$--modules.
(Note that the idealization $A\!\ltimes\! \mathcal M $ is also called  in \cite{fs}   {\it {the trivial
extension of $A$ by $\mathcal M $}}.)

We can apply the construction of Lemma \ref{le:1}  by taking $ \mathcal{R} :=\mathcal M $,   where $\mathcal M$ is
{an} $A$--module,  and considering $\mathcal M $ as a (commutative) ring without identity, endowed with a
trivial multiplication (defined by $x\!\cdot\! y := 0$  for all $x, y \in \mathcal M $).  In this way,  we have
that the Nagata's idealization is a particular case of the construction considered in Lemma \ref{le:1}. Therefore,
the Nagata's idealization can be interpreted as a particular case of the general  amalgamation construction.  Let
$B:= A\!\ltimes\! \mathcal M $ and $ \iota \ (=\iota_A)  : A \hookrightarrow B$ be the canonical embedding.  After
identifying  $\mathcal M $ with ${\mathcal M}^\ltimes$, $\mathcal M $ becomes an ideal of $B$. It is now
straighforward that $A\!\ltimes\! \mathcal M $ coincides with the amalgamation $A\!\Join^{ \iota  }\!\!\mathcal M
$.

Although this, the Nagata's idealization and the constructions of the type $\da$ can be  very different from an
algebraic point of view. In fact, for example, if $\mathcal M $ is a nonzero $A$--module, the ring $A\!\ltimes\!
\mathcal M $ is  always  not reduced (the element $(0, x)$ is nilpotent, for all $x\in \mathcal M $), but the
amalgamation $\da$ can be an integral domain (see  Example \ref{D+M} and Proposition \ref{dom}).
\end{remark}

 \section{Iteration of the construction $\da$}

{\sl  In the following all rings will always be commutative with identity, and every ring homomorphism will send 1
to 1}.


\smallskip

If $A$ is a ring and $I$ is an ideal of $A$, we can consider the amalgamated duplication of the ring $A$ along its ideal $I$ (=  the simple amalgamation of $A$ along $I$),
 i.e.,  $A\!\Join\! I: =\{(a, a+i) \mid a\in A ,\ i\in I\}$  (Example \ref{duplic}).   For the sake of simplicity, set $A' := A\!\Join\! I$.
It is immediately seen that $I':=\{0\}\!\times\! I$ is an ideal of $A'$, and thus we can consider again the simple amalgamation of $A'$   along  $ I'$, i.e., the ring  $A'':=A'\!\Join\! I'  \ (= (A\!\Join\!  I)\!\Join\! (\{0\}\!\times\! I)$).
It is easy to check that the ring $A''$ may not be considered as a simple amalgamation of $A$   along  one of its ideals.   However, we can show that $A''$ can be interpreted as an amalgamation of algebras, giving in this way an answer to a problem posed by B. Olberding in 2006 at Padova's Conference in honour of L. Salce.

\medskip

We start by showing that it is possible to  iterate the amalgamation of algebras  and the result is still    an
amalgamation of algebras.

More precisely, let $f: A\rightarrow B$ be a ring   homomorphism   and $\q$ an ideal of $B$. Since
$\q^{\prime_f}:=\{0\}\times \q$ is an ideal of the ring $A^{\prime_f}:=\da$, we can consider the simple
amalgamation of $A^{\prime_f}$ along $J^{\prime_f}$,
  i.e., $A^{\prime\prime_f} :=A^{\prime_f}\!\Join\! J^{\prime_f}$ (which coincides with  $A^{\prime_f}\!\Join^{\udes}\!\! \q^{\prime_f}$,
  where $\ude$
  $ := {\ude}_{A^{\prime_f}}$
   is the identity mapping of $A^{\prime_f}$~). On the other hand, we can consider the mapping $f^{(2)}: A\rightarrow B^{(2)}  := B \times B$,  defined by $a\mapsto (f(a),f(a))$ for all $a \in A$. Since $\q^{(2)}  :=\q\times\q$  is an ideal  of the ring  $B^{(2)}$, we can consider the amalgamation $A\!\Join^{f^{(2)}}\!\!\q^{(2)}$.  Then,  the mapping
$
A^{\prime\prime_f} \rightarrow A\Join^{f^{(2)}}\q^{(2)},
$ defined by
$
((a,f(a)+j_1),(a,f(a)+j_1)+(0, j_2))\mapsto (a, (f(a),f(a))+(j_1, j_1+j_2))
$ for all $a \in A$ and $j_1, j_2 \in J$,
is a ring isomorphism,  having as inverse map the map $
 A\! \Join^{f^{(2)}}\!\!\!\q^{(2)}\rightarrow A^{\prime\prime_f},
$
defined by $ (a, (f(a)+j_1,  f(a)+j_2)) \mapsto ((a ,f(a)+j_1),  (a ,f(a)+j_1) +(0, j_2 - j_1)) $ for all $a \in
A$ and $j_1, j_2 \in J$.
We will denote by $A\!\Join^{2,f}\!\! J$ or, simply, $A^{(2, f)}$ (if no confusion can
arise)  the ring $A\!\Join^{f^{(2)}}\!\! \q^{(2)}$, that we will call the {\it 2-amalgamation of the $A$--algebra
$B$ along $\q$ (with respect to $f$)}.

For $n\geq 2$, we   define the {\it n-amalgamation of the $A-$algebra $B$ along $\q$ (with respect to $f$)} by
setting
$$
A\!\Join^{n,f}\!\!\q:=A^{(n, f)}:=A\! \Join^{f^{(n)}}\!\! J^{(n)},
$$
where
 $f^{(n)}: A\rightarrow B^{(n)}:= B \times B \times ... \times B \mbox{  ($n$--times)}$
 is the diagonal homomorphism associated to $f$
 and $J^{(n)} := J \times J \times ... \times J \mbox{  ($n$--times)}$.  Therefore,
$$
 A\!\Join^{n,f}\!\!\q =\{ (a, (f(a), f(a), ... , f(a)) +(j_1, j_2, ... , j_n)) \mid a \in A,  \ j_1, j_2, ... , j_n \in J \}\,.
 $$

 \begin{proposition}\label{iter} Let $f: A\rightarrow B$ be a ring homomorphism  and $\q$ an ideal of $B$.
 Then $A\!\Join^{n,f}\!\! \q$
 is canonically isomorphic to the simple amalgamation
 $A^{(n -1, f)}\!\Join \!\! J^{(n -1, f)} \  (=  A^{(n -1, f)}\Join^{\udes}\!\! J^{(n -1, f)})$,
 where  $J^{(n -1, f)}$ is the canonical isomorphic image of $J$ inside  $A^{(n -1, f)}$  and \,
 $\ude := {\ude}_{A^{(n -1, f)}}$
is the identity mapping of $A^{(n -1, f)}$.
\end{proposition}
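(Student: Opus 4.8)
The plan is to prove the proposition by writing down an explicit pair of mutually inverse ring homomorphisms, generalizing verbatim the maps already exhibited for $n=2$ in the discussion preceding the statement. The first task is to describe $J^{(n-1,f)}$ concretely. Tracing the iteration exactly as in the case $n=2$ (or by a straightforward induction on the number of steps), the canonical image of $J$ inside $A^{(n-1,f)} = A\Join^{f^{(n-1)}}\!J^{(n-1)} \subseteq A\times B^{(n-1)}$ is the copy placed in the last of the $n-1$ copies of $B$, namely
$$J^{(n-1,f)} = \{(0,(0,\ldots,0,j)) \mid j\in J\}.$$
That this is an ideal of $A^{(n-1,f)}$ follows at once from $a\cdot j = f(a)j$ together with the fact that $J$ is an ideal of $B$, so that the simple amalgamation $A^{(n-1,f)}\Join J^{(n-1,f)} = A^{(n-1,f)}\Join^{\udes}\!J^{(n-1,f)}$ is well defined.

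Next I would define $\Phi\colon A^{(n,f)} \to A^{(n-1,f)}\Join J^{(n-1,f)}$ by sending the generic element $(a,(f(a)+j_1,\ldots,f(a)+j_n))$ to the pair $(x,x+y)$, where $x := (a,(f(a)+j_1,\ldots,f(a)+j_{n-1}))$ is the truncation to the first $n-1$ amalgamated components and $y := (0,(0,\ldots,0,j_n-j_{n-1}))\in J^{(n-1,f)}$; this is precisely the $n=2$ formula, in which $y$ was $(0,j_2-j_1)$. Well-definedness is immediate, since $x\in A^{(n-1,f)}$ and $y\in J^{(n-1,f)}$, whence $(x,x+y)$ lies in the simple amalgamation and explicitly $x+y = (a,(f(a)+j_1,\ldots,f(a)+j_{n-2},f(a)+j_n))$. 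The inverse $\Psi$ sends $(x,x+y)$, with $x=(a,(f(a)+j_1,\ldots,f(a)+j_{n-1}))$ and $y=(0,(0,\ldots,0,j))$, to $(a,(f(a)+j_1,\ldots,f(a)+j_{n-1},f(a)+j_{n-1}+j))$; checking that $\Psi\circ\Phi$ and $\Phi\circ\Psi$ are the respective identities is routine bookkeeping with the shift $j_n = j_{n-1}+j$.

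The one substantive point, which I expect to be the only real obstacle, is that $\Phi$ is multiplicative: the difference $j_n-j_{n-1}$ appearing in the last component of $y$ makes this a priori unclear. I would settle it by a direct componentwise computation. Let $\alpha,\alpha'$ be generic elements with parameters $(a,j_1,\ldots,j_n)$ and $(a',j_1',\ldots,j_n')$; their product $\alpha\alpha'$ in $A^{(n,f)}$ has $k$-th amalgamated component $f(aa')+J_k''$, where $J_k'' := f(a)j_k' + f(a')j_k + j_kj_k' \in J$. Writing $\Phi(\alpha\alpha') = (x'',x''+y'')$ with $x''$ the truncation to the components $J_1'',\ldots,J_{n-1}''$ and $y'' = (0,(0,\ldots,0,J_n''-J_{n-1}''))$, I would compare this with $\Phi(\alpha)\Phi(\alpha') = (xx',(x+y)(x'+y'))$. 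The equality $xx' = x''$ is clear from the formula for $J_k''$ with $k\le n-1$. The crux is $(x+y)(x'+y') = x''+y''$: the last component of the left-hand side is $(f(a)+j_n)(f(a')+j_n') = f(aa')+J_n''$, which is exactly the last component $f(aa')+J_{n-1}''+(J_n''-J_{n-1}'') = f(aa')+J_n''$ of $x''+y''$, while the lower components agree trivially. Thus $\Phi$ preserves products, whereas additivity and $\Phi(1)=1$ follow immediately from the componentwise operations; hence $\Phi$ is a ring isomorphism and the proof is complete.
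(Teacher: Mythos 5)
Your proposal is correct and is essentially the paper's own argument: both rest on the same explicit isomorphism built from the shift $j_n - j_{n-1}$ in the last coordinate (with $J^{(n-1,f)}$ correctly identified as the copy of $J$ sitting in the last factor of $B^{(n-1)}$), the paper's version of which is phrased as an induction on $n$ that only displays the step from $n=2$ to $n=3$. Your write-up simply states the map uniformly for all $n$ and carries out the multiplicativity check that the paper calls ``straightforward,'' so it is, if anything, more detailed than the original proof.
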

\begin{proof} The proof can be given by  induction on $n \geq 2$.
For the sake of simplicity, we only consider here the inductive step from $n=2$ to $n+1 \ (=3)$. It is
straightforward that the mapping $A\!\Join^{3,f}\!\! \q \rightarrow A^{\prime\prime_f}\!\!\Join\!\!
J^{\prime\prime_f}$, defined by $ (a, (f(a), f(a), f(a)) +(j_1, j_2, j_3))$ $ \mapsto$ $(a'', a'' +j'') $, where
$a'' := ((a, f(a)+j_1), (a, f(a)+j_1)+(0, j_2-j_1)) \in  A^{\prime\prime_f}$ and $j'' :=  ((0,0), (0, j_3 -j_2))
\in  J^{\prime\prime_f}$, for all $a \in A$ and $ j_1, j_2, j_3 \in J$ establishes a canonical ring isomorphism.
\end{proof}

 In particular, let
  $A$ be a ring and $I$ an ideal of $A$, the simple amalgamation of $A' := A\!\Join\! I$ along $I' := \{0\} \times I$,
  that is $A'' := A' \!\! \Join \!\! I'$,  is canonically isomorphic to  the 2-amalgamation
  $A\!\Join^{2,\udes}\!\! I = \{ (a, (a, a) +(i_1, i_2)) \mid a\in A, \ i_1, i_2 \in I \}$.

\begin{example}
We can apply the previous (iterated) construction to curve singularities. Let $A$ be the ring of an algebroid
curve with $h$ branches (i.e.,  $A$ is a one-dimensional reduced ring of the form $K[\![X_1, X_2,
\z,X_r]\!]/\bigcap_{i=1}^hP_i$, where $K$ is an algebraically closed field, $X_1,X_2, \z,X_r$ are indeterminates
over $K$ and $P_i$ is an height $r-1$ prime ideal of $K[\![X_1,X_2, \z,X_r]\!]$, for $1 \leq i \leq r$). If $I$ is
a regular and proper ideal of $A$, then, with an argument similar to that used in the proof of
 \cite[Theorem 14]{d'a}  (where the case of a simple amalgamation of the ring of the given
 algebroid curve  is investigated), it can be shown  that $A\Join^nI$ is the ring of an algebroid curve with $(n+1)h$
 branches; {moreover, for each branch of $A$, there are exactly $n+1$ branches of $A\Join^nI$ isomorphic to
 it.}
\end{example}

\section{Pullback constructions}

Let $f:A\longrightarrow B$   be a ring homomorphism and $\q$ an ideal of $B$. In the remaining part of the paper,
we intend to investigate the algebraic properties of the ring $\da$, in relation with those of $A,B,J$ and $f$.
One important tool we can use for this purpose is the fact that the ring $\da$ can be represented as a  {pullback}
(see next Proposition \ref{pull}). On the other hand, we will provide a characterization of  those pullbacks that
give rise to amalgamated algebras (see next Proposition \ref{fibret}). After proving these facts, we will make
some pertinent remarks useful for the subsequent investigation {on
amalgamated algebras.}

\begin{definition}\label{de} \rm
We recall that, if $\alpha:A\rightarrow C,\,\,\, \beta:B\rightarrow C$ are ring homomorphisms, the subring
$D:=\alpha\times_{_C}\beta:=\{(a,b)\in A\times B \mid \alpha(a)=\beta(b)\}$ of $A\times B$ is called the
\textit{{pullback}} (or \textit{fiber product}) of $\alpha$ and $\beta$.
\end{definition}
The fact tat $D$ is a  {pullback} can also be described by saying that the triplet $(D, p_A, p_B)$ is a solution
of the universal problem of rendering commutative the diagram built on $\alpha$ and $\beta$
$$
\begin{CD}
D  @> p_{_A}>> A\\
@V p_{_B}VV @ V\alpha VV\\
B @> \beta >> C
\end{CD}
$$
 where   $p_{_A}$ (respectively, $p_{_B}$) is  the restriction to $\alpha\times_{_C}\beta$ of the projection of $A\times B$  onto $A$ (respectively, $B$).

\begin{proposition}\label{pull}
 Let $f:A\rightarrow B$ be a ring homomorphism and $\q$ be an ideal of $B$. If $\pi:B\rightarrow B/J$ is the canonical projection and $\breve f:=\pi\circ f$, then $\da=\breve f\times_{_{B/J}}\pi$.
\end{proposition}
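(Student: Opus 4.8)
The plan is to prove the asserted equality of subrings of $A\times B$ by verifying the two set-theoretic inclusions directly, since both sides are already presented as explicit subsets of $A\times B$. By Definition \ref{de}, the pullback $\breve f\times_{B/J}\pi$ is exactly the set of pairs $(a,b)\in A\times B$ satisfying $\breve f(a)=\pi(b)$. The first step is to unwind this defining condition: since $\breve f=\pi\circ f$, the equation $\breve f(a)=\pi(b)$ reads $\pi(f(a))=\pi(b)$, i.e.\ $f(a)+J=b+J$ in the quotient $B/J$, which is in turn equivalent to $b-f(a)\in J$.

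With this reformulation in hand, both inclusions are immediate. For $\da\subseteq\breve f\times_{B/J}\pi$, I would take an arbitrary element $(a,f(a)+j)$ with $a\in A$ and $j\in J$, and note that $(f(a)+j)-f(a)=j\in J$, so the pullback condition holds. Conversely, for $\breve f\times_{B/J}\pi\subseteq\da$, given a pair $(a,b)$ with $b-f(a)\in J$, I would set $j:=b-f(a)\in J$ and rewrite $b=f(a)+j$, thereby exhibiting $(a,b)=(a,f(a)+j)$ as an element of $\da$.

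The only point that demands any care---and it is the sole genuine, if mild, obstacle---is the correct translation of equality in the quotient $B/J$ into a membership statement in $B$: namely that $\pi(x)=\pi(y)$ if and only if $x-y\in J=\Ker\pi$. Once this is made explicit, the two inclusions reduce to the trivial identities above. I would finally remark that $\da$ is a subring of $A\times B$ by Lemma \ref{R_E} and that the pullback is a subring of $A\times B$ by Definition \ref{de}; hence the equality of the underlying sets automatically upgrades to an equality of subrings, since both inherit the componentwise operations from $A\times B$, and no separate verification of compatibility with the ring structure is required.
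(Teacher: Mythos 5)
Your proof is correct and is exactly the argument the paper intends: its own proof just says the statement ``follows easily from the definitions,'' and your two inclusions, obtained by unwinding $\breve f(a)=\pi(b)$ into $b-f(a)\in J$, are precisely those details written out. Nothing differs in approach; you have simply made the routine verification explicit.
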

\begin{proof} The statement follows easily from the definitions.\end{proof}

\begin{remark}\label{nuova}
Notice that we have many other ways to describe the ring $\da$ as a  pullback. In fact, if $C:=A\times B/J$ and
$u:A\rightarrow C,\, v:A\times B\rightarrow C$ are the canonical ring homomorphisms defined by
$u(a):=(a,f(a)+J),\, v((a,b)):=(a,b+J)$, for every $(a,b)\in A\times B$, it is straightforward to show that $\da$
is canonically isomorphic to $u\times_{_C}v$. On the other hand, if $\aaa:=f^{-1}(\q)$,\
  $  {\breve{u}} :A/\aaa\rightarrow A/\aaa\times B/\q$ and ${\breve{v}}:A \times B\rightarrow A/\aaa\times B/\q$
  are the natural ring homomorphisms induced by $u$ and $v$, respectively, then $\da$ is also
   canonically isomorphic to the {pullback} of ${\breve{u}}$ and ${\breve{v}}$.
\end{remark}

The next goal is to show that the rings of the form $\da$,
for some ring homomorphism $f:A\rightarrow B$ and some ideal $J$ of $B$,
 determine a distinguished subclass of the class of all fiber products.

\begin{proposition}\label{factor}
Let $A,B,C,\alpha,\beta$ as in Definition \ref{de}, and let $f:A\rightarrow B$ a ring homomorphism. Then the following conditions are equivalent.
\begin{enumerate}
\item[\rm(i)] There exist an ideal $J$ of $B$ such that $\da$ is the fiber product of $\alpha$ and $\beta$.
\item[\rm(ii)] $\alpha$ is the composition $\beta\circ f$.
\end{enumerate}
If the previous conditions hold, then $J=\Ker(\beta)$.
\end{proposition}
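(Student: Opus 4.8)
The plan is to prove the equivalence (i) $\Leftrightarrow$ (ii) by exploiting Proposition \ref{pull}, which already tells us exactly what pullback structure $\da$ carries: it is $\breve f\times_{_{B/J}}\pi$, where $\pi:B\rightarrow B/J$ is the canonical projection and $\breve f=\pi\circ f$. The whole statement is really a comparison of two pullback squares over $A\times B$, so my strategy throughout is to compare the defining subsets of $A\times B$ rather than to invoke the universal property abstractly.

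First I would prove (i) $\Rightarrow$ (ii). Suppose there is an ideal $J$ of $B$ with $\da=\alpha\times_{_C}\beta$ as subrings of $A\times B$. The containment $\da\subseteq\alpha\times_{_C}\beta$ is the informative direction: every element of $\da$ has the form $(a,f(a)+j)$ with $a\in A$, $j\in J$, and membership in the fiber product forces $\alpha(a)=\beta(f(a)+j)$ for all such $a$ and $j$. Taking $j=0$ gives $\alpha(a)=\beta(f(a))=(\beta\circ f)(a)$ for every $a\in A$, which is precisely (ii). The same relation, combined with $\alpha(a)=\beta(f(a)+j)$ for arbitrary $j\in J$, yields $\beta(f(a))=\beta(f(a)+j)$, i.e. $\beta(j)=0$, so $J\subseteq\Ker(\beta)$; I record this now since it will feed into the final claim.

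Next I would prove (ii) $\Rightarrow$ (i), which is where I must exhibit the ideal $J$. The natural and essentially forced choice is $J:=\Ker(\beta)$, an ideal of $B$. Assuming $\alpha=\beta\circ f$, I must check the two inclusions defining the equality $\da=\alpha\times_{_C}\beta$. For $\da\subseteq\alpha\times_{_C}\beta$: given $(a,f(a)+j)$ with $j\in\Ker(\beta)$, I compute $\beta(f(a)+j)=\beta(f(a))=\alpha(a)$ using (ii), so the pair lies in the fiber product. Conversely, if $(a,b)\in\alpha\times_{_C}\beta$, then $\beta(b)=\alpha(a)=\beta(f(a))$, whence $\beta(b-f(a))=0$, so $j:=b-f(a)\in\Ker(\beta)=J$ and $b=f(a)+j$ exhibits $(a,b)$ as an element of $\da$. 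This establishes (i).

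Finally, for the concluding assertion that $J=\Ker(\beta)$ whenever the conditions hold, I already have $J\subseteq\Ker(\beta)$ from the (i) $\Rightarrow$ (ii) argument; the reverse inclusion follows by running the converse computation of the previous paragraph, since any $j\in\Ker(\beta)$ makes $(0,f(0)+j)=(0,j)$ lie in $\alpha\times_{_C}\beta=\da$, forcing $j\in J$. I do not expect a serious obstacle here: the only point requiring mild care is being explicit that the equality in (i) is an equality of subrings of the \emph{same} ambient product $A\times B$, so that the two descriptions can be compared elementwise; once that is fixed, every step is a short direct verification, and the uniqueness of $J$ drops out automatically.
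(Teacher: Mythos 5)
Your proof is correct and follows essentially the same route as the paper: (i)$\Rightarrow$(ii) by evaluating on the elements $(a,f(a))\in\da$, and (ii)$\Rightarrow$(i) by taking $J:=\Ker(\beta)$ and verifying both inclusions via $\beta(b)=\alpha(a)=\beta(f(a))$, so $b-f(a)\in\Ker(\beta)$. The only difference is that you spell out the final claim $J=\Ker(\beta)$ (the paper dismisses it as ``straightforward''), and your argument for it --- $J\subseteq\Ker(\beta)$ from the membership relation $\alpha(a)=\beta(f(a)+j)$, and $\Ker(\beta)\subseteq J$ from $(0,j)\in\alpha\times_{_C}\beta=\da$ --- is valid.
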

\noindent

\begin{proof} Assume condition (i) holds, and let $a$ be an element of $A$.
Then $(a,f(a))\in \da$ and, by assumption, we have $\alpha(a)=\beta(f(a))$. This prove condition  (ii).

 Conversely, assume that $\alpha=\beta\circ f$.
 We want to show that   the ring $A\Join^f\Ker(\beta)$ is the fiber product of $\alpha$ and $\beta$.
 The inclusion $A\Join^f\Ker(\beta)\subseteq \alpha\times_{_C}\beta$ is clear.
 On the other hand, let $(a,b)\in \alpha\times_{_C}\beta$.
 By assumption, we have $\beta(b)=\alpha(a)=\beta(f(a))$.
This shows that $b-f(a)\in\Ker(\beta)$, and thus $(a,b)=(a,f(a)+k)$, for some $k\in\Ker(\beta)$. Then
$A\Join^f\Ker(\beta)= \alpha\times_{_C}\beta$ and condition (i) is true.

The last statement of the proposition is straightforward.  \end{proof}

In the previous proposition we assume the existence of the ring homomorphism $f$. The next step is to give a
condition for the existence of $f$. We start by recalling that a ring homomorphism $r:B\rightarrow A$ is called
{\it a ring retraction} if there exists a ring homomorphism  $\iota:A\rightarrow B$, such that $r\circ \iota={\rm
id}_{A}$. In this situation,  $\iota$ is necessarily injective, $r$ is necessarily surjective, and
 $A$ is called a {\it retract of $B$}.

\begin{example}\label{retraction}
 If  $r:B\rightarrow A$ is a ring retraction and $\iota:A\hookrightarrow B$ is  a ring embedding such that
$r\circ\iota={\rm{id}}_A$, then $B$ is naturally isomorphic to $A\Join^{\iota}\!\Ker(r)$.  This is a consequence
of the facts, easy to verify, that $B=\iota(A)+\Ker(r)$ and that $\iota^{-1}(\Ker(r))=\{0\}$ (for more details see
next Proposition \ref{inizio}(3)).
\end{example}

\begin{remark}\label{farfret}
Let $f:A\rightarrow B$ be a ring homomorphism and $J$ be an ideal of $B$. Then $A$ is a retract of $\da$. More
precisely, $\pi_{_A}:\da\rightarrow A$, $(a,f(a),j)\mapsto a$, is a retraction, since the map $\iota:A\rightarrow
\da$, $a\mapsto (a,f(a))$, is a ring embedding  such that $\pi_{_A}\circ\iota={\rm id}_A$.
\end{remark}

\begin{proposition}\label{fibret}
Let $A,B,C,\alpha,\beta,p_{_A},p_{_B}$ be as in Definition \ref{de}. Then,  the following conditions are equivalent.
\begin{enumerate}
\item[\rm(i)] 
  $p_{_A}:\alpha\times_{_C}\beta\rightarrow A$ is a ring retraction.
\item[\rm(ii)] There exist an ideal $J$ of $B$ and a ring homomorphism $f:A\rightarrow B$ such that $\alpha\times_{_C}\beta=\da$.
\end{enumerate}
\end{proposition}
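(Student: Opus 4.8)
The plan is to establish the two implications separately, leaning entirely on the two facts already in hand: Remark \ref{farfret} for (ii)$\,\Rightarrow\,$(i), and Proposition \ref{factor} for the reverse implication. Throughout I write $D:=\alpha\times_{_C}\beta$ for the pullback, so that $p_{_A}:D\rightarrow A$ and $p_{_B}:D\rightarrow B$ are the (ring-homomorphism) restrictions of the two projections of $A\times B$.

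For (ii)$\,\Rightarrow\,$(i) I would simply invoke Remark \ref{farfret}. Assume $D=\da$ for some ring homomorphism $f:A\rightarrow B$ and some ideal $J$ of $B$. Then the map $\iota:A\rightarrow\da$, $a\mapsto(a,f(a))$, is a ring embedding, and the projection $p_{_A}$ restricted to $\da$ is exactly $(a,f(a)+j)\mapsto a$, so $p_{_A}\circ\iota={\rm id}_A$. Hence $p_{_A}$ is a ring retraction, which is condition (i).

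The substantive direction is (i)$\,\Rightarrow\,$(ii). Assume $p_{_A}:D\rightarrow A$ is a ring retraction, so there is a ring homomorphism $\sigma:A\rightarrow D$ with $p_{_A}\circ\sigma={\rm id}_A$. Writing $\sigma(a)=(\sigma_1(a),\sigma_2(a))\in A\times B$, the retraction identity forces $\sigma_1=p_{_A}\circ\sigma={\rm id}_A$, so $\sigma(a)=(a,\sigma_2(a))$ for every $a\in A$. Setting $f:=\sigma_2=p_{_B}\circ\sigma:A\rightarrow B$ then produces a ring homomorphism (being a composite of ring homomorphisms), and since $\sigma(a)=(a,f(a))$ lies in the pullback $D$, we read off $\alpha(a)=\beta(f(a))$ for all $a$, that is, $\alpha=\beta\circ f$. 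At this point the existence of $f$ is secured, and I would close the argument by applying Proposition \ref{factor}: because $\alpha=\beta\circ f$, that proposition (the implication (ii)$\,\Rightarrow\,$(i) there) furnishes an ideal $J$ of $B$, explicitly $J=\Ker(\beta)$, for which $\da=\alpha\times_{_C}\beta=D$. This is precisely condition (ii).

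The only delicate point, and the step I expect to require the most care, is the extraction of $f$ from the section $\sigma$: one must verify that the first component of $\sigma$ is forced to be the identity (so that $\sigma(a)$ has the shape $(a,f(a))$) and that $f$ is a genuine ring homomorphism into $B$. Both follow at once from the fact that $p_{_A}$ and $p_{_B}$ are ring homomorphisms on the pullback, after which everything reduces to a direct appeal to Remark \ref{farfret} and Proposition \ref{factor}.
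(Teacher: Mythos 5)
Your proof is correct and takes essentially the same approach as the paper's: Remark \ref{farfret} disposes of (ii)$\Rightarrow$(i), and for (i)$\Rightarrow$(ii) you define $f:=p_{_B}\circ\sigma$ from the section $\sigma$, verify $\alpha=\beta\circ f$, and conclude by Proposition \ref{factor}. The only cosmetic difference is that you check $\alpha=\beta\circ f$ by unwinding membership of $(a,f(a))$ in the pullback, while the paper computes $\beta\circ f=\beta\circ p_{_B}\circ\iota=\alpha\circ p_{_A}\circ\iota=\alpha$ directly; these are the same observation.
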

\begin{proof}  Set $D:=\alpha\times_{_C}\beta$. Assume that condition (i) holds and
let $\iota:A\hookrightarrow D$ be a ring embedding such that $p_{_A}\circ\iota={\rm id}_A$. If we consider the
ring homomorphism $f:=p_{_B}\circ\iota:A\rightarrow B$, then, by using the definition of a pullback, we have
$\beta\circ f=\beta\circ p_{_B}\circ \iota=\alpha\circ p_{_A}\circ \iota=\alpha\circ {\rm id}_A=\alpha$. Then,
condition (ii) follows by applying Proposition \ref{factor}.   Conversely, let $f:A\rightarrow B$ be a ring
homomorphism such that $D=\da$, for some ideal $J$ of $B$. By Remark \ref{farfret}, the projection of $\da$ onto
$A$ is a ring retraction.\end{proof}

\begin{remark} Let $f, \ g :A\rightarrow B$ be two ring homomorphisms and $J$ be an ideal of $B$.
It can happen that $\da=A\!\Join^g\!\!J$,  with $f \neq g$. In fact,
 it is easily seen that $\da=A\!\Join^g\!\!J$ if and only if $f(a)-g(a)\in J$, for every $a\in A$. \\
For example, let $f,g:A[X]\rightarrow A[X]$ be the ring  homomorphisms defined by   $f(X):=X^2,\ f(a) := a, \
g(X):=X^3, \ g(a) := a$, for all $a \in A$,  and set $J:=XA[X]$. Then $f\neq g$, but $A[X]\Join^f\!J=A[X]\Join^g\!
J$, since $f(p)-g(p)\in J$, for all  $p\in A[X]$.
\end{remark}

The next goal  is to give some sufficient conditions for a pullback to be reduced. Given a ring $A$, we denote by
${\rm Nilp}(A)$ the ideal of all nilpotent elements of $A$.

\begin{proposition}\label{prid}
 With the notation of Definition \ref{de}, we have:
\begin{enumerate}
 \item [\rm(1)] If $D \ (=\alpha\times_{_C}\beta)$ is reduced, then

{\ } ${\rm Nilp}(A)\cap \Ker(\alpha)=\{0\}$ \ and \ ${\rm Nilp}(B)\cap \Ker(\beta)=\{0\}.$
\item[\rm(2)] If at least one of the following conditions holds
\begin{enumerate}
 \item [\rm(a)] $A$ is reduced and  ${\rm Nilp}(B)\cap \Ker(\beta)=\{0\}$,
 \item[\rm(b)] $B$ is reduced and ${\rm Nilp}(A)\cap \Ker(\alpha)=\{0\}$,
\end{enumerate}
then $D$ is reduced.
\end{enumerate}
\end{proposition}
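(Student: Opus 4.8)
The plan is to handle the two parts separately, relying in both on the elementary observation that a pair $(a,b)$ in the subring $D=\alpha\times_{_C}\beta$ of $A\times B$ is nilpotent precisely when both $a$ and $b$ are nilpotent, since $(a,b)^n=(a^n,b^n)$.

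For part (1), assuming $D$ reduced, I would take an element $a\in{\rm Nilp}(A)\cap\Ker(\alpha)$ and test it through the pair $(a,0)\in A\times B$. The crucial point is that this pair actually belongs to $D$: indeed $\alpha(a)=0=\beta(0)$, so the defining condition of the pullback is met, and this is exactly where the hypothesis $a\in\Ker(\alpha)$ is used. Since $a^n=0$ forces $(a,0)^n=(0,0)$, the element $(a,0)$ is nilpotent in $D$, so reducedness of $D$ yields $(a,0)=(0,0)$ and hence $a=0$. The symmetric argument, running $b\in{\rm Nilp}(B)\cap\Ker(\beta)$ through the pair $(0,b)$ (which lies in $D$ because $\alpha(0)=0=\beta(b)$), gives ${\rm Nilp}(B)\cap\Ker(\beta)=\{0\}$.

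For part (2), it suffices to treat case (a), since case (b) follows at once by interchanging the roles of $(A,\alpha)$ and $(B,\beta)$. So I would assume $A$ reduced and ${\rm Nilp}(B)\cap\Ker(\beta)=\{0\}$, and take a nilpotent $(a,b)\in D$. Then $a$ is nilpotent in $A$ and $b$ is nilpotent in $B$; reducedness of $A$ kills the first coordinate, giving $a=0$. The key step is then to invoke the pullback relation: from $(a,b)\in D$ we have $\alpha(a)=\beta(b)$, so $a=0$ forces $\beta(b)=\alpha(0)=0$, that is $b\in\Ker(\beta)$. Combined with $b\in{\rm Nilp}(B)$, the hypothesis ${\rm Nilp}(B)\cap\Ker(\beta)=\{0\}$ gives $b=0$, whence $(a,b)=(0,0)$ and $D$ is reduced.

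I do not expect a genuine obstacle here, as the whole argument is built on the componentwise description of nilpotents in a subring of $A\times B$ together with the defining equation of the fiber product. The one conceptual point worth highlighting is the transfer mechanism in part (2): reducedness of one factor annihilates one coordinate, and then the pullback equation $\alpha(a)=\beta(b)$ propagates that vanishing into a kernel-membership statement for the other coordinate, which the hypothesis on ${\rm Nilp}\cap\Ker$ finishes off. Part (1) is then simply the converse observation, obtained by choosing the two coordinate-supported test elements that are guaranteed to sit inside $D$.
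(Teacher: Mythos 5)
Your proof is correct and follows essentially the same route as the paper's: both use the test elements $(a,0)$ and $(0,b)$ for part (1), and for part (2) both reduce to case (a) by symmetry, kill the first coordinate via reducedness of $A$, and then use the pullback relation to place $b$ in ${\rm Nilp}(B)\cap\Ker(\beta)$. You merely spell out the membership checks (e.g.\ $\alpha(a)=0=\beta(0)$) that the paper leaves implicit.
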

\begin{proof}  (1) Assume $D$ reduced. By simmetry, it sufficies to show that
${\rm Nilp}(A)\cap \Ker(\alpha)=\{0\}$. If $a\in{\rm Nilp}(A)\cap \Ker(\alpha)$, then $(a,0)$ is a nilpotent
element of $D$, and thus $a=0$.

(2) By the simmetry of conditions (a) and (b), it is enough to show that, if condition (a) holds, then $D$ is
reduced. Let $(a,b)$ be a nilpotent element of $D$. Then $a=0$, since $a\in {\rm Nilp}(A)$ and $A$ is reduced.
Thus we have $(a,b)=(0,b)\in{\rm Nilp}(D)$, hence $b\in{\rm Nilp}(B)\cap\Ker(\beta)=\{0\}$.
\end{proof}
 We study next the Noetherianity of a ring arising from a  {pullback} construction as in Definition \ref{de}.

\begin{proposition}\label{noepull}
 With the notation of Definition \ref{de}, the following conditions are equivalent.
\begin{enumerate}
 \item [\rm(i)] $D \ (=\alpha\times_{_C}\beta)$ is a Noetherian ring.
 \item [\rm(ii)] $\Ker(\beta)$ is a Noetherian $D$--module (with the  $D$--module   structure naturally induced by $p_{_B}$) and $p_{_A}(D)$ is a Noetherian ring.
\end{enumerate}
\end{proposition}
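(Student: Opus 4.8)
The plan is to realize $D$ as the middle term of a short exact sequence of $D$--modules whose outer terms are precisely the two objects appearing in condition (ii), and then to invoke the standard fact that, in a short exact sequence of modules over a ring, the middle term is Noetherian if and only if both outer terms are. Since a ring is Noetherian exactly when it is Noetherian as a module over itself, this reduces the whole statement to a bookkeeping of module structures.

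First I would compute the kernel of the projection $p_{_A}\colon D\rightarrow A$. An element $(a,b)\in D$ lies in $\Ker(p_{_A})$ exactly when $a=0$, and then the pullback condition $\alpha(0)=\beta(b)$ forces $b\in\Ker(\beta)$; hence $\Ker(p_{_A})=\{0\}\times\Ker(\beta)$. Restricting $p_{_A}$ to its image therefore yields the short exact sequence of $D$--modules
$$
0\rightarrow \{0\}\times\Ker(\beta)\rightarrow D\xrightarrow{p_{_A}} p_{_A}(D)\rightarrow 0.
$$
Next I would check that the two outer $D$--modules are the ones named in (ii). For the left-hand term, the $D$--action is $(a,b)\cdot(0,k)=(0,bk)=(0,p_{_B}((a,b))\,k)$, so the map $(0,k)\mapsto k$ is an isomorphism of $D$--modules onto $\Ker(\beta)$ equipped with the $D$--structure induced by $p_{_B}$. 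For the right-hand term, $D$ acts on $p_{_A}(D)$ through the ring surjection $p_{_A}$, so a $D$--submodule of $p_{_A}(D)$ is exactly an ideal of the ring $p_{_A}(D)$; consequently $p_{_A}(D)$ is Noetherian as a $D$--module if and only if it is Noetherian as a ring.

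With these identifications in place, the equivalence is immediate: $D$ is a Noetherian ring precisely when it is Noetherian as a module over itself, which by the exact sequence above holds if and only if both $\{0\}\times\Ker(\beta)\cong\Ker(\beta)$ and $p_{_A}(D)$ are Noetherian $D$--modules, that is, if and only if $\Ker(\beta)$ is a Noetherian $D$--module and $p_{_A}(D)$ is a Noetherian ring. The only point requiring care---hardly a genuine obstacle---is the bookkeeping of the $D$--module structures: one must verify that the scalar action on $\Ker(\beta)$ really factors through $p_{_B}$ and that the action on $p_{_A}(D)$ really factors through $p_{_A}$, so that the abstract module-theoretic statement translates faithfully into the ring-theoretic formulation of condition (ii).
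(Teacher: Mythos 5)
Your proof is correct and follows essentially the same route as the paper: both identify $\Ker(p_{_A})=\{0\}\times\Ker(\beta)$, form the short exact sequence $0\rightarrow\Ker(\beta)\rightarrow D\rightarrow p_{_A}(D)\rightarrow 0$ of $D$--modules, apply the standard two-out-of-three result for Noetherian modules, and observe that the $D$--submodules of $p_{_A}(D)$ are exactly its ideals. Your explicit verification that the $D$--action on $\Ker(\beta)$ factors through $p_{_B}$ is a detail the paper leaves implicit, but the argument is the same.
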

\begin{proof} It is easy to see that  $\Ker(p_{_A})=\{0\}\times \Ker(\beta)$. Thus,  we have the following short exact sequence of $D$--modules
$$0\longrightarrow\Ker(\beta)\stackrel{i }{\longrightarrow}D\stackrel{p_{_A}}{\longrightarrow}p_{_A}(D)
\longrightarrow 0,$$
where $ i$ is the natural $D$--module embedding (defined by $x\mapsto (0,x)$ for all $ x \in \Ker(\beta)$). By  \cite[Proposition (6.3)]{am}, $D$ is a Noetherian ring if and only if $\Ker(\beta)$ and $p_{_A}(D)$ are Noetherian as $D$--modules. The statement now follows immediately, since the $D$--submodules of $p_{_A}(D)$ are exactly the ideals of the ring $p_{_A}(D)$. \end{proof}

\begin{remark}  Note  that, in Proposition \ref{noepull},
we did not require $\beta$ to be surjective. However, if $\beta$ is surjective, then $p_A$ is also surjective and
so $p_A(D) = A$. Therefore, in this case, $D$ is a Noetherian ring if and only if $A$ is a Noetherian ring and
$\Ker(\beta)$ is a Noetherian $D$--module.
\end{remark}

\section{The ring $\da$: some  basic algebraic properties}

We start with some straightforward  consequences of the definition of amalgamated algebra   $\da$.

\begin{proposition}
\label{inizio}   Let $f: A\rightarrow B$ be a ring   homomorphism,  $\q$ an ideal of $B$  and let $\da := \{ (a,
f(a)+j) \mid a\in A, \ j \in J \}$ be as in Section 2.
\begin{enumerate}
 \item[\rm (1)]  Let  $\iota := \iota_{A, f, J}: A\rightarrow \da$ be the natural
the ring homomorphism defined by $\iota(a)  := (a, f(a))$, for all $a \in A$. Then $\iota$ is an embedding, making
$\da$   a  ring   extension of $A$ \  (with $\iota(A) =  \Gamma(f) \  (:=\{(a,f(a)) \mid a\in A\}$  subring of
$\da$).

\item[\rm (2)]  Let $I$ be an ideal of $A$ and set $ I\!\Join^f\!\! J :=\{(i, f(i)+j) \mid i\in I, j \in J \}$.
Then $I\!\Join^f\!\! J$  is an ideal of $\da$, the composition  of canonical homomorphisms
$A\stackrel{\iota}{\hookrightarrow} \da\twoheadrightarrow \da/I\!\Join^f\!\! J$ is a surjective ring homomorphism
and its kernel coincides with  $I$.\\ Hence, we have the following canonical isomorphism:
 $$
\frac{\da}{I\!\Join^f\!\! J}\cong \frac{A}{I}\,.
$$

 \item[\rm (3)] Let $p_{_A}: \da \rightarrow A$ and $p_{_B}:\da\rightarrow B$ be the natural projections of
 $\da \subseteq A \times B$ {into} $A$ and $B$, respectively.  Then $p_{_A}$ is surjective
 and\, $\Ker(p_{_A})=\{0\}\times J$.\\
Moreover, $p_{_B}(\da)=f(A)+ J$ and $\Ker(p_{_B})=f^{-1}(\q)\times \{0\}$.  Hence, the following canonical
isomorphisms  hold:
$$
\frac{\da}{(\{0\}\times \q)}\cong A\quad \mbox{ and } \quad\frac{\da}{f^{-1}(\q)\times \{0\}}\cong f(A)+\q \,.
$$
\item[\rm(4)] Let $\gamma:\da\rightarrow (f(A)+J)/J$
be the natural ring homomorphism, defined by $(a,f(a)+j)\mapsto f(a)+J$. Then $\gamma$ is surjective and
$\Ker(\gamma)=f^{-1}(J)\times J$. Thus, there exists a natural isomorphism
$$
\frac{\da}{f^{-1}(J)\times J}\cong\frac{f(A)+J}{J}\,.
$$
In particular, when $f$ is surjective we have
$$
\frac{\da}{f^{-1}(J)\times J}\cong \frac{B}{J}\,.
$$
 \end{enumerate}
\end{proposition}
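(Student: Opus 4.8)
The plan is to handle the four parts in sequence, exploiting that parts (2)--(4) are each an instance of the fundamental homomorphism theorem applied to an explicit surjective ring homomorphism out of $\da$; the work amounts to writing down the right maps and computing their images and kernels, while part (1) supplies the ambient embedding.

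First, for (1) I would check directly that $\iota(a):=(a,f(a))$ is a ring homomorphism: additivity and multiplicativity hold coordinatewise because $f$ is a ring homomorphism, and $\iota(1)=(1,f(1))=(1,1)$ is the identity of the subring $\da$ of $A\times B$. Injectivity is immediate from the first coordinate, and $\iota(A)=\Gamma(f)$ is the definition, so $A$ is identified with $\Gamma(f)\subseteq\da$.

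For (2), the one genuine computation is that $I\!\Join^f\!\!J$ absorbs multiplication: for $(i,f(i)+j)\in I\!\Join^f\!\!J$ and $(a,f(a)+k)\in\da$ the product has first coordinate $ia\in I$ and second coordinate $f(ia)+(f(i)k+jf(a)+jk)$, whose parenthesized term lies in $J$ since $J$ is an ideal of $B$; closure under subtraction being clear, $I\!\Join^f\!\!J$ is an ideal. I would then read off that the composite $A\xrightarrow{\iota}\da\twoheadrightarrow\da/(I\!\Join^f\!\!J)$ is surjective, because $(a,f(a)+j)$ and $\iota(a)=(a,f(a))$ differ by $(0,j)\in(0)\!\Join^f\!\!J\subseteq I\!\Join^f\!\!J$, and that its kernel is $I$, since $(a,f(a))\in I\!\Join^f\!\!J$ forces $a\in I$ (and $j=0$); the isomorphism $\da/(I\!\Join^f\!\!J)\cong A/I$ follows.

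For (3), the isomorphism $\da/(\{0\}\times J)\cong A$ is the case $I=(0)$ of (2), once one observes $\{0\}\times J=(0)\!\Join^f\!\!J$; surjectivity of $p_{_A}$ and $\Ker(p_{_A})=\{0\}\times J$ are immediate. For $p_{_B}$ the image is $f(A)+J$ by definition, and $(a,f(a)+j)\in\Ker(p_{_B})$ means $f(a)=-j\in J$, i.e. $a\in f^{-1}(J)$ and $j=-f(a)$, giving $\Ker(p_{_B})=f^{-1}(J)\times\{0\}$ and hence the second isomorphism. Finally, for (4) I would factor $\gamma$ as $p_{_B}$ followed by the reduction $B\to B/J$ corestricted to $(f(A)+J)/J$, so that surjectivity is inherited from (3). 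The main (mild) obstacle is the kernel identity $\Ker(\gamma)=f^{-1}(J)\times J$: I must verify both that every $(a,j')\in f^{-1}(J)\times J$ really lies in $\da$ --- writing $(a,j')=(a,f(a)+(j'-f(a)))$, which is legitimate because $a\in f^{-1}(J)$ makes $f(a)\in J$ and hence $j'-f(a)\in J$ --- and that it is killed by $\gamma$, and conversely that $\gamma(a,f(a)+j)=f(a)+J=0$ forces $a\in f^{-1}(J)$. The case $f$ surjective is then immediate from $f(A)+J=B$.
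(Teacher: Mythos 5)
Your proposal is correct: the paper itself gives no written proof of this proposition (it is presented as a collection of straightforward consequences of the definition, with the proof omitted), and your argument --- direct verification that $\iota$ is an embedding and $I\!\Join^f\!\!J$ is an ideal, followed by the first isomorphism theorem applied to the explicit surjections onto $A/I$, $A$, $f(A)+J$ and $(f(A)+J)/J$ --- is exactly the routine computation the authors leave to the reader. In particular, your check that every element of $f^{-1}(J)\times J$ actually lies in $\da$, via $(a,j')=(a,f(a)+(j'-f(a)))$, settles the only step in part (4) that is not completely immediate.
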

\vskip -34pt \hfill $\Box$
 \vskip 25pt

The ring $B_\diamond :=f(A)+\q$ (which is a subring of $B$) has an important role in the construction $\da$.\  For
instance,  if $f^{-1}(\q)=\{0\}$,  we have $\da\cong B_\diamond$ (Proposition \ref{inizio}(3)).     Moreover, in
general,  $J$ is an ideal also in $B_{\diamond}$ and, if we denote by $f_{\diamond}: A \rightarrow B_{\diamond}$
the ring homomorphism induced from $f$, then $A\!\Join^{f_{\diamond}}\!\! J = A \!\Join^f\!\! J$.  The next result
shows one more aspect of the essential role of the ring $B_{\diamond}$ for the construction $\da$.

\medskip
\begin{proposition}\label{dom}
  With the notation of Proposition \ref{inizio}, assume $\q\neq\{0\}$. Then,   the following conditions are equivalent.
\begin{enumerate}
 \item[\rm(i)] $\da$ is an integral domain.
 \item[\rm(ii)] $f(A)+\q$ is an integral domain and $f^{-1}(\q)=\{0\}$.
\end{enumerate}
In particular, if $B$ is an integral domain and $f^{-1}(\q)=\{0\}$, then $\da$ is an integral domain.
\end{proposition}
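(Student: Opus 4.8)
The plan is to reduce both implications to the single structural fact recorded in Proposition \ref{inizio}(3), namely that $p_{_B}$ induces an isomorphism $\da/(f^{-1}(\q)\times\{0\})\cong f(A)+\q$, so that whenever $f^{-1}(\q)=\{0\}$ the projection $p_{_B}$ is itself an isomorphism $\da\cong f(A)+\q$. With this in hand the implication (ii) $\Rightarrow$ (i) is essentially automatic: assuming $f^{-1}(\q)=\{0\}$ and that $f(A)+\q$ is a domain, the ring $\da$ is isomorphic to a domain, hence a domain.

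For the converse (i) $\Rightarrow$ (ii) the key forward-looking observation is that it suffices to prove $f^{-1}(\q)=\{0\}$: once this is established, the very same isomorphism $\da\cong f(A)+\q$ shows that $f(A)+\q$, being isomorphic to the domain $\da$, is a domain. Thus the whole content of the proposition sits in deriving $f^{-1}(\q)=\{0\}$ from the assumption that $\da$ is a domain, and this is precisely the step where the hypothesis $\q\neq\{0\}$ has to enter.

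To establish $f^{-1}(\q)=\{0\}$ I would argue by exhibiting a zero-divisor relation. Fix $a\in f^{-1}(\q)$, so that $f(a)\in\q$. Then both $(a,f(a))$ and $(0,f(a))$ lie in $\da$, whence their difference $(a,0)$ lies in $\da$ as well. Using $\q\neq\{0\}$, choose a nonzero $j_0\in\q$; then $(0,j_0)$ is a nonzero element of $\da$, and the componentwise product gives $(a,0)\cdot(0,j_0)=(0,0)$. Since $\da$ is a domain and $(0,j_0)\neq 0$, this forces $(a,0)=0$, i.e. $a=0$, so that $f^{-1}(\q)=\{0\}$, as required. The only point needing genuine care here is checking that $(a,0)$ really belongs to $\da$ (it does, being a difference of two of its elements, precisely because $f(a)\in\q$), together with the observation that the assumption $\q\neq\{0\}$ is indispensable: without a nonzero $j_0$ the argument collapses, in accordance with the fact that for $\q=\{0\}$ one simply has $\da\cong A$ and the equivalence fails.

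Finally, the concluding ``in particular'' clause follows with no additional effort. If $B$ is a domain, then $f(A)+\q$ is a subring of a domain and hence a domain; so under the extra hypothesis $f^{-1}(\q)=\{0\}$ both conditions of (ii) hold, and (i) follows from the implication (ii) $\Rightarrow$ (i) already treated.
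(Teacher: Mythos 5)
Your proposal is correct and follows essentially the same route as the paper's proof: both directions rest on the isomorphism $\da\cong f(A)+\q$ from Proposition \ref{inizio}(3) when $f^{-1}(\q)=\{0\}$, and the key step (i)$\Rightarrow$(ii) is exactly the paper's zero-divisor argument, namely that $f(a)\in\q$ puts $(a,0)$ in $\da$ and $(a,0)(0,j_0)=(0,0)$ for a nonzero $j_0\in\q$. The only cosmetic difference is that you phrase this step directly while the paper phrases it as a contradiction.
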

\begin{proof}  (ii)$\Rightarrow$(i) is obvious, since  $f^{-1}(\q)=\{0\}$ implies that $\da\cong f(A)+\q$ (Proposition \ref{inizio}(3)).

 Assume that condition (i) holds.
 If there exists an element $a\in A\w\{0\}$ such that $f(a)\in\q$, then $(a,0)\in (\da)\w\{(0,0)\}$.
 Hence, if $j$ is a nonzero element of $\q$, we have $(a,0)(0,j)=(0,0)$, a contradiction.
 Thus $f^{-1}(\q)=\{0\}$. In this case, as observed above,  $\da\cong f(A)+\q$ (Proposition \ref{inizio}(3)), so  $f(A)+\q$ is an integral domain.\end{proof}

\begin{remark}
(1) \rm Note that, if $\da$ is an integral domain, then   $A$ is also an integral domain,  by Proposition
\ref{inizio}(1).

(2) \rm Let $B =A$,  $f = \ude_A$ and $J=I$ be an ideal of $A$.  In this situation,  $A\!\Join^{{\udes}_{_{\!
A}}}\!\! I$ (the simple amalgamation of $A$ along $I$) coincides with the amalgamated duplication of $A$ along $I$
(Example \ref{duplic}) and it is never an integral domain, unless $I =\{0\}$ and $A$ is an integral domain.
\end{remark}
Now, we characterize when the amalgamated algebra  $\da$ is a reduced ring.

\begin{proposition}\label{farfridotta}
We preserve the notation of Proposition \ref{inizio}. The following conditions are equivalent.
\begin{enumerate}
 \item [\rm(i)] $\da$ is a reduced ring.
 \item [\rm(ii)] $A$ is a reduced ring and ${\rm Nilp}(B)\cap\q=\{0\}$.
\end{enumerate}
In particular,  if $A$ and $B$ are reduced, then $\da$ is reduced;  conversely, if $\q$ is a radical ideal of $B$ and $\da$ is reduced,  then  $B$  (and $A$) is reduced.
\end{proposition}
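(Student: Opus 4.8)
The plan is to read off the result from the pullback description of $\da$ given in Proposition \ref{pull}, combined with the general reducedness criterion for fiber products in Proposition \ref{prid}. By Proposition \ref{pull} we have $\da=\breve f\times_{_{B/J}}\pi$, where $\pi:B\rightarrow B/J$ is the canonical projection and $\breve f=\pi\circ f$. Thus, in the notation of Definition \ref{de}, we take $\alpha:=\breve f$, $\beta:=\pi$ and $C:=B/J$, so that in particular $\Ker(\beta)=\Ker(\pi)=J$.

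For the implication (i)$\Rightarrow$(ii), I would argue in two steps. First, $A$ is reduced: by Proposition \ref{inizio}(1) the map $\iota:A\hookrightarrow\da$ is a ring embedding, and any subring of a reduced ring is reduced. Second, applying Proposition \ref{prid}(1) to the above pullback yields at once ${\rm Nilp}(B)\cap\Ker(\beta)={\rm Nilp}(B)\cap J=\{0\}$. For the converse (ii)$\Rightarrow$(i), I would simply note that condition (ii) is exactly hypothesis (a) of Proposition \ref{prid}(2) in our setting, namely $A$ reduced together with ${\rm Nilp}(B)\cap\Ker(\beta)=\{0\}$; hence $\da$ is reduced.

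For the two particular assertions: if both $A$ and $B$ are reduced, then ${\rm Nilp}(B)=\{0\}$, so condition (ii) collapses to ``$A$ reduced'' and (i) follows. For the last assertion, assume $J$ is radical and $\da$ is reduced; the equivalence just proved gives ${\rm Nilp}(B)\cap J=\{0\}$, while ${\rm Nilp}(B)=\sqrt{(0)}\subseteq\sqrt J=J$ forces ${\rm Nilp}(B)={\rm Nilp}(B)\cap J=\{0\}$, so $B$ is reduced (and $A$ is reduced by the equivalence).

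The one point that requires care is that Proposition \ref{prid}(1) only delivers ${\rm Nilp}(A)\cap\Ker(\alpha)=\{0\}$, which is strictly weaker than ``$A$ reduced''. The full reducedness of $A$ must instead be extracted from the embedding $A\hookrightarrow\da$; this is the single place where the intrinsic structure of the amalgamation, beyond the generic pullback, is actually used, and it constitutes the main (though mild) obstacle in the argument.
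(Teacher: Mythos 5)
Your proposal is correct and follows essentially the same route as the paper: both use the pullback description of Proposition \ref{pull} together with Proposition \ref{prid} (part (2,a) for (ii)$\Rightarrow$(i), part (1) for half of (i)$\Rightarrow$(ii)), and both supply the missing reducedness of $A$ via the embedding $\iota: A \hookrightarrow \da$ of Proposition \ref{inizio}(1), which is exactly the paper's observation that $a \in {\rm Nilp}(A)$ forces $(a,f(a)) \in {\rm Nilp}(\da)$. The handling of the two particular assertions also matches the paper's argument.
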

\begin{proof}  From Proposition \ref{prid}{(2, a)} we deduce easily that
(ii)$\Rightarrow$(i),  after noting that, with  the notation of Proposition \ref{pull},  in this case $
\Ker(\pi)=J. $

(i)$\Rightarrow$(ii) By Proposition \ref{prid}{(1)} and the previous equality,  it is enough to show that if $\da$
is reduced, then $A$ is reduced.  This is trivial   because, if $a\in{\rm Nilp}(A)$, then $(a, f(a))\in{\rm
Nilp}(\da)$.

Finally, the first part of the last statement is straightforward. As for the second part, we have $\{0\}={\rm
Nilp}(B)\cap J={\rm Nilp}(B)$ (since $J$ is radical, and so $J\supseteq {\rm Nilp}(B)$). Hence $B$ is
reduced.\end{proof}

\begin{remark} (1) Note that, from the previous result, when $B =A$, $f =\ude_A \ (=\ude)$
and $J=I$ is an ideal of $A$,  we reobtain easily that  $A\!\Join\!I \ (= A\!\Join^{\udes}\!\! I)$   is a reduced
ring if and only if $A$ is a reduced ring  \cite[Proposition 2.1]{d'a-f-2}.

   (2) The previous proposition implies that the property of being reduced for $\da$ is independent of the nature of $f$.

 (3) If $A$ and $f(A)+J$ are reduced rings, then $\da$ is a reduced ring, by Proposition \ref{farfridotta}. But
the converse is not true in general. As a matter of fact, let $A:=\mathbb Z$, $B:=\mathbb Z\times (\mathbb
Z/4\mathbb Z)$, $f:A\rightarrow B$ be the ring homomorphism such that $f(n)=(n,[n]_4)$, for every $n\in\mathbb
\inte$ (where $[n]_4$ denotes the class of $n$ modulo 4). If we set $J:=\mathbb Z\times \{[0]_4\}$, then $J\cap
{\rm Nilp}(B)=\{0\}$, and thus $\da$ is a reduced ring, but $(0,[2]_4)=(2,[2]_4)+(-2,[0]_4)$ is a nonzero
nilpotent element of $f(\mathbb Z)+J$.
\end{remark}

The next proposition provides an answer to the question  of when $\da$ is a Noetherian ring.
\begin{proposition}\label{noet}
  With the notation of Proposition \ref{inizio}, the following conditions are equivalent.
\begin{enumerate}
 \item[\rm(i)] $\da$ is a Noetherian ring.
 \item[\rm(ii)] $A$ and $f(A)+\q$ are Noetherian rings.
\end{enumerate}
\end{proposition}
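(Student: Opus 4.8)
The plan is to realize $\da$ as a pullback and then invoke the Noetherianity criterion for pullbacks already established in Proposition \ref{noepull}. By Proposition \ref{pull} we have $\da=\breve f\times_{_{B/J}}\pi$, where $\pi:B\rightarrow B/J$ is the canonical projection and $\breve f=\pi\circ f$. In the notation of Definition \ref{de} this means $\alpha=\breve f$, $\beta=\pi$, $C=B/J$, so that $\Ker(\beta)=\Ker(\pi)=J$. Since $\pi$ is surjective, the projection $p_{_A}:\da\rightarrow A$ is surjective (Proposition \ref{inizio}(3)), whence $p_{_A}(\da)=A$. Thus Proposition \ref{noepull} reduces the statement to the equivalence: $\da$ is Noetherian if and only if $A$ is a Noetherian ring and $J$ is a Noetherian $\da$--module, where $J$ carries the $\da$--module structure induced by $p_{_B}$.

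First I would observe that this $\da$--module structure on $J$ factors through $p_{_B}(\da)=f(A)+J=:B_\diamond$: for $(a,f(a)+j)\in\da$ and $j'\in J$ one has $(a,f(a)+j)\cdot j'=(f(a)+j)j'$. Hence the $\da$--submodules of $J$ are precisely its $B_\diamond$--submodules, and $J$ is a Noetherian $\da$--module if and only if it is a Noetherian $B_\diamond$--module. It therefore remains to prove, under the standing hypothesis that $A$ is Noetherian, that $J$ is a Noetherian $B_\diamond$--module if and only if $B_\diamond$ is a Noetherian ring.

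For this I would exploit the short exact sequence of $B_\diamond$--modules
$$0\longrightarrow J\longrightarrow B_\diamond\longrightarrow B_\diamond/J\longrightarrow 0,$$
recalling that $J$ is an ideal of $B_\diamond$. The crucial point, and the place where the hypothesis on $A$ enters, is that $B_\diamond/J=(f(A)+J)/J\cong f(A)/(f(A)\cap J)$ is a quotient of $f(A)\cong A/\Ker(f)$, hence a Noetherian ring since $A$ is Noetherian; in particular $B_\diamond/J$ is a Noetherian $B_\diamond$--module, its $B_\diamond$--submodules being exactly the ideals of the ring $B_\diamond/J$. If $B_\diamond$ is Noetherian, then its ideal $J$ is finitely generated, hence a Noetherian $B_\diamond$--module. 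Conversely, if $J$ is a Noetherian $B_\diamond$--module, then both outer terms of the sequence are Noetherian $B_\diamond$--modules, so the middle term $B_\diamond$ is a Noetherian $B_\diamond$--module by closure of Noetherian modules under extensions (\cite[Proposition (6.3)]{am}), i.e. $B_\diamond=f(A)+J$ is a Noetherian ring.

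The bulk of the argument is routine once the pullback criterion of Proposition \ref{noepull} is in hand. The only genuinely substantive step is the reduction from the module condition (``$J$ is a Noetherian module'') to the ring condition (``$f(A)+J$ is a Noetherian ring'') in the last paragraph; this hinges on the observation that $B_\diamond/J$ is Noetherian, which is precisely where the Noetherianity of $A$ is used, rather than that of $B$.
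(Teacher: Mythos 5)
Your proof is correct. For the direction (ii)$\Rightarrow$(i) it coincides with the paper's argument: realize $\da$ as the pullback $\breve f\times_{_{B/J}}\pi$ (Proposition \ref{pull}), apply Proposition \ref{noepull} using the surjectivity of $p_{_A}$, and observe that the $\da$--module structure on $J$ factors through $p_{_B}$, so that the $\da$--submodules of $J$ are exactly the ideals of $f(A)+J$ contained in $J$; Noetherianity of $f(A)+J$ then makes $J$ a Noetherian $\da$--module. Where you genuinely diverge is the direction (i)$\Rightarrow$(ii): the paper disposes of it in one line, since by Proposition \ref{inizio}(3) both $A\cong \da/(\{0\}\times J)$ and $f(A)+J\cong \da/(f^{-1}(J)\times\{0\})$ are quotient rings of $\da$, hence Noetherian. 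You instead run the pullback criterion backwards to conclude that $J$ is a Noetherian $B_\diamond$--module (where $B_\diamond:=f(A)+J$), note that $B_\diamond/J\cong f(A)/(f(A)\cap J)$ is Noetherian because $A$ is, and then use closure of Noetherian modules under extensions in the exact sequence $0\rightarrow J\rightarrow B_\diamond\rightarrow B_\diamond/J\rightarrow 0$ to conclude that $B_\diamond$ is a Noetherian ring. This is more laborious than the paper's quotient observation, but it has two merits: it packages the whole proposition as a single chain of equivalences through the one module condition of Proposition \ref{noepull}, and it isolates exactly where the Noetherianity of $A$ (as opposed to that of $B$) intervenes, namely in making $B_\diamond/J$ Noetherian. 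Both arguments rest on the same key fact that submodule structures on $J$ seen from $\da$ and from $B_\diamond$ agree.
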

\begin{proof} (ii)$\Rightarrow$(i). Recall that $\da$ is the fiber product of the ring homomorphism $\breve f:A\rightarrow B/J$ (defined  by $a\mapsto f(a)+J$) and of the canonical projection $\pi:B\rightarrow B/J$. Since the projection $p_{_A}:\da\rightarrow A$ is surjective (Proposition \ref{inizio}(3)) and $A$ is a Noetherian ring, by Proposition \ref{noepull}, it sufficies to show that $J(=\Ker(\pi))$, with the structure of $\da-$module induced by $p_{_B}$, is  Noetherian. But this fact is easy, since every $\da-$submodule of $J$ is an ideal of the Noetherian ring $f(A)+J$.

 (i)$\Rightarrow$(ii) is a straighforward consequence of Proposition \ref{inizio}(3).\end{proof}

Note that, from the previous result, when $B =A$,  $f =\ude_A \  (=\ude)$ and $J=I$ is an ideal of $A$,  we
reobtain easily that  $A\!\Join\!I \ (= A\!\Join^{\udes}\!\! I)$   is a Noetherian ring if and only if $A$ is a
Noetherian ring  \cite[Corollary 2.11]{d'a-f-1}.

However, the previous proposition    has a moderate interest because the Noetherianity of $\da$ is not directly related to the data (i.e., $A, B, f$ and $J$), but to the ring $B_{\diamond} = f(A) +J$ which is canonically isomorphic $\da$,
if $f^{-1}(\q)=\{0\}$ (Proposition \ref{inizio}(3)).  Therefore,  in order to obtain  more useful criteria for the Noetherianity of $\da$, we specialize Proposition \ref{noet} in some relevant cases.

\begin{proposition}\label{noepiubello}
  With the notation of Proposition \ref{inizio},  assume that at least one of the following conditions holds:
\begin{enumerate}
 \item[\rm(a)] $\q$ is a finitely generated $A$--module (with the structure naturally induced by $f$).
\item[\rm(b)] $\q$ is a Noetherian $A$--module (with the structure naturally induced by $f$).
 \item[\rm(c)] $f(A)+\q$ is Noetherian as $A$--module (with the structure naturally induced by $f$).
 \item[\rm(d)] $f$ is a finite homomorphism.
\end{enumerate}
Then $\da$ is Noetherian if and only if $A$ is Noetherian.\
 In particular, if $A$ is a Noetherian ring and $B$ is a Noetherian
 $A$--module (e.g., if $f$ is a finite homomorphism \cite[Proposition 6.5]{am}),
 then $\da$ is a Noetherian ring for all ideal $J$ of $B$.
\end{proposition}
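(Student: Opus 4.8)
The plan is to prove the biconditional by separating the two directions and, for the substantive one, to show that each of the four hypotheses (a)--(d) forces — once $A$ is assumed Noetherian — the single, cleaner condition (b), namely that $\q$ is a Noetherian $A$-module. The backward implication ``$\da$ Noetherian $\Rightarrow$ $A$ Noetherian'' needs no extra hypothesis: by Proposition \ref{inizio}(3) the projection $p_{_A}:\da\rightarrow A$ is surjective, so $A$ is a homomorphic image of $\da$ and inherits Noetherianity. Thus all the content lies in the forward implication, for which I would invoke Proposition \ref{noet}: since $A$ is already Noetherian, it suffices to prove that the ring $B_\diamond=f(A)+\q$ is Noetherian.

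Next I would reduce all four cases to condition (b). Throughout, the $A$-module structure on every submodule in sight is the one induced by $f$, namely $a\cdot x=f(a)x$, so that $A$ acts \emph{through} $f(A)\subseteq B_\diamond$. Condition (a) gives (b) because a finitely generated module over the Noetherian ring $A$ is Noetherian. Condition (c) gives (b) because $\q$ is an $A$-submodule of $f(A)+\q$ (indeed $f(a)j\in\q$ since $\q$ is an ideal of $B$), and a submodule of a Noetherian module is Noetherian. Condition (d) gives (b) because a finite homomorphism makes $B$ a finitely generated, hence (as $A$ is Noetherian) Noetherian, $A$-module, of which $\q$ is again a submodule. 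Hence in every case I may assume that $\q$ is a Noetherian $A$-module.

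The heart of the argument is then the implication: if $A$ is Noetherian and $\q$ is a Noetherian $A$-module, then $B_\diamond=f(A)+\q$ is a Noetherian ring. I would argue through the short exact sequence of $B_\diamond$-modules
$$0\longrightarrow \q\longrightarrow B_\diamond\longrightarrow B_\diamond/\q\longrightarrow 0,$$
together with \cite[Proposition (6.3)]{am}. On the right, $B_\diamond/\q\cong f(A)/(f(A)\cap\q)$ is a homomorphic image of $A$, hence a Noetherian ring and so a Noetherian $B_\diamond$-module. For the submodule $\q$, the key technical point — which I expect to be the main obstacle to phrase cleanly — is the passage from the $A$-module to the $B_\diamond$-module structure: every $B_\diamond$-submodule of $\q$ is in particular an $A$-submodule, precisely because $A$ acts through $f(A)\subseteq B_\diamond$. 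Consequently the poset of $B_\diamond$-submodules of $\q$ is contained in that of its $A$-submodules, so the ascending chain condition for $A$-submodules forces it for $B_\diamond$-submodules, and $\q$ is a Noetherian $B_\diamond$-module. The exact sequence then yields that $B_\diamond$ is a Noetherian $B_\diamond$-module, i.e.\ a Noetherian ring, and Proposition \ref{noet} completes the forward implication.

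Finally, the ``in particular'' assertion follows immediately: if $B$ is a Noetherian $A$-module, then so is its $A$-submodule $\q$ for every ideal $\q$ of $B$, which is exactly condition (b); and a finite homomorphism is precisely the situation making $B$ a finitely generated $A$-module, which over a Noetherian $A$ is Noetherian by \cite[Proposition 6.5]{am}.
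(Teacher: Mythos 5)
Your proof is correct, and its overall skeleton matches the paper's (backward direction via the surjection $p_{_A}$, reduction of the four hypotheses to a single one, Noetherianity of $B_\diamond=f(A)+\q$ as a ring, then Proposition \ref{noet}((ii)$\Rightarrow$(i))); but the pivot and the final step differ. The paper funnels (a), (b), (d) into condition (c) and then finishes in one line: since $A$ acts through $f(A)\subseteq B_\diamond$, every ideal of $B_\diamond$ is an $A$-submodule of $B_\diamond$, so ACC on $A$-submodules (condition (c)) immediately gives ACC on ideals, i.e.\ $B_\diamond$ is a Noetherian ring. You instead funnel everything into condition (b) and rebuild the Noetherianity of $B_\diamond$ from the exact sequence $0\rightarrow \q\rightarrow B_\diamond\rightarrow B_\diamond/\q\rightarrow 0$ of $B_\diamond$-modules, using \cite[Proposition (6.3)]{am}, the fact that $B_\diamond/\q\cong f(A)/(f(A)\cap\q)$ is a homomorphic image of $A$, and the same ``poset containment'' trick --- applied to $\q$ rather than to $B_\diamond$ itself --- to pass from $A$-submodules to $B_\diamond$-submodules. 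Both arguments are sound; the paper's choice of hub (c) buys brevity, since the key observation need only be stated once for $B_\diamond$, while your route through (b) is slightly longer but mirrors the proof of Proposition \ref{noepull}, making explicit how the Noetherianity of $B_\diamond$ is assembled from the ideal $\q$ and the quotient coming from $A$, and it isolates the weakest-looking of the three equivalent conditions as the real hypothesis.
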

\begin{proof} Clearly, without any extra assumption,   if $\da$ is a Noetherian ring,  then $A$ is a Noetherian ring, since it is isomorphic to $\da /(\{0\} \times J)$ (Proposition \ref{inizio}(3)).

Conversely, assume that  $A$ is a Noetherian ring. In this case, it is straighforward to verify that  conditions (a), (b), and (c) are equivalent \cite[Propositions 6.2, 6.3, and  6.5]{am}.   Moreover (d) implies (a), since $\q$ is an $A$--submodule of $B$, and $B$ is a Noetherian $A$--module under condition (d)  \cite[Proposition  6.5]{am}.

Using {the} previous observations, it is enough to show that $\da$ is Noetherian if $A$ is Noetherian and
condition (c) holds. If $f(A)+\q$ is Noetherian as an $A$--module, then $f(A)+\q$ is a Noetherian ring (every
ideal of $f(A)+\q$ is an  $A$--submodule of $f(A)+\q$). The conclusion follows from Proposition
\ref{noet}((ii)$\Rightarrow$(i)).

 The last statement is a consequence of the first part and of the fact that, if $B$ is a Noetherian $A$--module, then (a) holds \cite[Proposition 6.2]{am}.   \end{proof}

\begin{proposition}\label{finite}
We preserve the notation of Propositions \ref{inizio} and \ref{pull}. If $B$ is a Noetherian ring and the ring
homomorphism $\breve f:A\rightarrow B/J$ is finite, then $\da$ is a Noetherian ring if and only if $A$ is a
Noetherian ring.
\end{proposition}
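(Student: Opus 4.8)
The plan is to reduce the whole question to the ring $B_{\diamond}:=f(A)+J$ and to show that, under the stated hypotheses, $B_{\diamond}$ is automatically Noetherian. Granting this, Proposition \ref{noet} settles the matter at once: since $\da$ is Noetherian if and only if both $A$ and $f(A)+J=B_{\diamond}$ are Noetherian rings, and $B_{\diamond}$ turns out to be Noetherian here regardless of $A$, the Noetherianity of $\da$ becomes equivalent to that of $A$. So the one thing I really need to establish is that $B_{\diamond}$ is Noetherian whenever $B$ is Noetherian and $\breve f$ is finite.

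First I would record the elementary structural facts: $B_{\diamond}$ is a subring of $B$ containing $1=f(1)$, it contains $J$ as an ideal, and $J$ is also an ideal of $B$. The key observation is that $B$ is module-finite over $B_{\diamond}$. Indeed, the finiteness of $\breve f:A\rightarrow B/J$ means exactly that $B/J$ is a finitely generated $A$-module through $\breve f$; choosing $b_1,\dots,b_n\in B$ whose classes generate $B/J$ gives $B=f(A)b_1+\cdots+f(A)b_n+J$. Since $f(A)\subseteq B_{\diamond}$ and $J\subseteq B_{\diamond}$, this yields $B=B_{\diamond}\cdot 1+B_{\diamond}b_1+\cdots+B_{\diamond}b_n$, so that $B$ is generated as a $B_{\diamond}$-module by $1,b_1,\dots,b_n$.

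Next I would invoke the Eakin--Nagata theorem: if $R\subseteq S$ are commutative rings with $S$ finitely generated as an $R$-module and $S$ Noetherian, then $R$ is Noetherian. Applying this to the pair $B_{\diamond}\subseteq B$, where $B$ is Noetherian by hypothesis and module-finite over $B_{\diamond}$ by the previous step, gives that $B_{\diamond}$ is a Noetherian ring; note that this conclusion is obtained with no assumption whatsoever on $A$. Combining this with Proposition \ref{noet} completes the argument, and the ``only if'' direction is in any case automatic since $A\cong\da/(\{0\}\times J)$ is a quotient of $\da$ (Proposition \ref{inizio}(3)).

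The main obstacle is precisely extracting Noetherianity of $B_{\diamond}$ from that of $B$. One cannot simply argue that $J$ is a Noetherian $\da$-module (equivalently, a Noetherian $B_{\diamond}$-module) by restricting its Noetherian $B$-module structure, because restriction of scalars to the subring $B_{\diamond}$ may introduce new submodules, and any attempt to bound them leads back to the Noetherianity of $B_{\diamond}$ one is trying to prove. The Eakin--Nagata theorem is exactly the tool that breaks this circularity, converting the module-finiteness of $B$ over $B_{\diamond}$ (which is where the finiteness of $\breve f$ enters) into the ring-theoretic Noetherianity of $B_{\diamond}$.
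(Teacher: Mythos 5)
Your proof is correct, but it takes a genuinely different route from the paper's. The paper disposes of the nontrivial direction in one line by citing an external pullback theorem: since $\da = \breve f\times_{_{B/J}}\pi$ (Proposition \ref{pull}) with $\pi$ surjective, the fact that $A$ and $B$ Noetherian together with $\breve f$ finite force the pullback to be Noetherian is quoted directly from \cite[Proposition 1.8]{F}. You instead stay inside the ring $B$: you reduce everything to the intermediate ring $B_\diamond=f(A)+J$ via Proposition \ref{noet}, prove that $B$ is module-finite over $B_\diamond$ (your generation argument $B=B_\diamond\cdot 1+B_\diamond b_1+\cdots+B_\diamond b_n$ is exactly right, and it is precisely where the finiteness of $\breve f$ enters), and then invoke Eakin--Nagata to descend Noetherianity from $B$ to $B_\diamond$. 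Both arguments are sound, and in fact they are cousins at a deeper level: the standard proof of the cited pullback theorem is itself an Eakin--Nagata argument, applied to the inclusion $D\subseteq A\times B$ rather than to $B_\diamond\subseteq B$. What your version buys is twofold: it is self-contained modulo one classical theorem, and it isolates the stronger intermediate fact that $f(A)+J$ is Noetherian whenever $B$ is Noetherian and $\breve f$ is finite, with no hypothesis on $A$ at all (the paper's citation, by contrast, consumes the Noetherianity of both $A$ and $B$ as input). Your closing remark about why one cannot simply restrict the $B$-module structure on $J$ to $B_\diamond$ correctly identifies the circularity that Eakin--Nagata is needed to break; this is also why the paper's own proof of Proposition \ref{noet} only works once $f(A)+J$ is already known to be Noetherian.
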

\begin{proof} If $\da$ is Noetherian we already know that $A$ is Noetherian.
Hence, we only need to show that if $A$ and $B$ are Noetherian rings and $\breve f$ is finite then $\da$ is
Noetherian. But this fact follows immediately from
 \cite[Proposition 1.8]{F}.
\end{proof}
As a consequence of the previous proposition, we can characterize when rings of the form $A+XB[X]$ and
$A+XB[\![X]\!]$ are Noetherian. Note that S. Hizem and A. Benhissi \cite{hiz} have already given  a
characterization of the Noetherianity of the power series rings of the form $A+XB[\![X]\!]$.  The next corollary
provides a simple proof of Hizem and Benhissi's Theorem and shows that a similar characterization holds for the
polynomial case (in several indeterminates). At the Fez Conference in June 2008, S. Hizem has announced to have
proven a similar result in the polynomial ring case with a totally different approach.

\begin{corollary}\label{noegenerale}
Let $A \subseteq B$ be a ring extension and ${\boldsymbol X}:=\{X_1,\z,X_n\}$ a finite set of indeterminates over $B$. Then the following conditions are equivalent.
\begin{enumerate}
\item [\rm(i)] $A+{\boldsymbol X}B[{\boldsymbol X}]$ is a Noetherian ring.
\item [\rm(ii)] $A+{\boldsymbol X}B[\![{\boldsymbol X}]\!]$   is a Noetherian ring.
\item [\rm(iii)] $A$ is a Noetherian ring and $A \subseteq B$ is a finite ring extension.
\end{enumerate}
\end{corollary}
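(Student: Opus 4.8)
The plan is to route everything through the amalgamation/pullback machinery already in hand, using the identifications of Example \ref{significativo}. Write $R := A+{\boldsymbol X}B[{\boldsymbol X}]$ and $R' := A+{\boldsymbol X}B[\![{\boldsymbol X}]\!]$; by Example \ref{significativo} these are isomorphic to the amalgamations $A\Join^{\sigma'}\!J'$ and $A\Join^{\sigma''}\!J''$, where $\sigma',\sigma''$ are the natural embeddings of $A$ into $B[{\boldsymbol X}]$ and $B[\![{\boldsymbol X}]\!]$, and $J'={\boldsymbol X}B[{\boldsymbol X}]$, $J''={\boldsymbol X}B[\![{\boldsymbol X}]\!]$. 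The first thing I would record is that the induced map $\breve{\sigma'}:A\rightarrow B[{\boldsymbol X}]/J'=B$ (resp. $\breve{\sigma''}:A\rightarrow B[\![{\boldsymbol X}]\!]/J''=B$) is just the inclusion $A\hookrightarrow B$; hence $\breve{\sigma'}$ and $\breve{\sigma''}$ are finite homomorphisms precisely when $A\subseteq B$ is a finite ring extension. This identification is what lets Proposition \ref{finite} speak to both $R$ and $R'$.

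For the implications (iii)$\Rightarrow$(i) and (iii)$\Rightarrow$(ii), assume $A$ is Noetherian and $A\subseteq B$ is finite. Then $B$ is a finitely generated module over the Noetherian ring $A$, hence a Noetherian ring, so by the Hilbert basis theorem both $B[{\boldsymbol X}]$ and $B[\![{\boldsymbol X}]\!]$ are Noetherian. Since the induced maps are the finite inclusion $A\hookrightarrow B$, Proposition \ref{finite} applies to each amalgamation and gives that $R$ and $R'$ are Noetherian if and only if $A$ is; as $A$ is Noetherian by hypothesis, both (i) and (ii) follow.

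The converse directions (i)$\Rightarrow$(iii) and (ii)$\Rightarrow$(iii) carry the real content. That $A$ is Noetherian is immediate: by Proposition \ref{inizio}(3), $A$ is a homomorphic image (namely $R/(\{0\}\times J')\cong A$, and similarly for $R'$) of a Noetherian ring. To extract finiteness of $A\subseteq B$ I would pass to the conormal-type module $J'/J'^2$. Writing $\mathfrak m:=(X_1,\dots,X_n)$, one has $J'=\mathfrak m B[{\boldsymbol X}]$ and $J'^2=\mathfrak m^2 B[{\boldsymbol X}]$ (the product being unambiguous since $J'$ is a common ideal of $R$ and $B[{\boldsymbol X}]$), so $J'/J'^2$ is the degree-one part $\bigoplus_{i=1}^{n}BX_i\cong B^n$. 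Because $J'\cdot(J'/J'^2)=0$, the $R$-module structure factors through $R/J'\cong A$, so $J'/J'^2\cong B^n$ as an $A$-module. Since $R$ is Noetherian, $J'$ is a finitely generated ideal, whence $J'/J'^2$ is finitely generated over $R$ and therefore over $A$; consequently $B^n$, and hence $B$, is a finitely generated $A$-module, i.e. $A\subseteq B$ is finite. The identical computation with $J''=\mathfrak m B[\![{\boldsymbol X}]\!]$ and $J''/J''^2\cong\mathfrak m/\mathfrak m^2\cong B^n$ settles case (ii).

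The main obstacle is exactly this last step. Proposition \ref{finite} delivers Noetherianity of the amalgamation only once one already knows that $\breve f$ is finite and the target is Noetherian, so it cannot by itself produce the finiteness of $A\subseteq B$ in the converse direction. The decisive device is the passage to $J'/J'^2$ (resp. $J''/J''^2$) and its recognition as $B^n$ endowed with an $A$-module structure, which is what converts the purely ring-theoretic Noetherian hypothesis on $R$ (resp. $R'$) into module-finiteness of $B$ over $A$. Everything else is a routine combination of the earlier results with the Hilbert basis theorem.
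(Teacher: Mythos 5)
Your proof is correct, and your direction (iii)$\Rightarrow$(i),(ii) is exactly the paper's argument: identify the two rings with $A\Join^{\sigma'}\!{\boldsymbol X}B[{\boldsymbol X}]$ and $A\Join^{\sigma''}\!{\boldsymbol X}B[\![{\boldsymbol X}]\!]$, observe that the induced homomorphisms to $B[{\boldsymbol X}]/{\boldsymbol X}B[{\boldsymbol X}]\cong B$ and $B[\![{\boldsymbol X}]\!]/{\boldsymbol X}B[\![{\boldsymbol X}]\!]\cong B$ are the inclusion $A\hookrightarrow B$, and apply Proposition \ref{finite}; your explicit use of the Hilbert basis theorem to get Noetherianity of $B[{\boldsymbol X}]$ and $B[\![{\boldsymbol X}]\!]$ supplies a hypothesis of that proposition which the paper leaves tacit. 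Where you genuinely diverge is in (i),(ii)$\Rightarrow$(iii). The paper argues with bare hands: it takes the ideal generated by the linear monomials $bX_k$ ($b\in B$), writes it as $(f_1,\z,f_m)$ using Noetherianity, and verifies by comparing degree-one terms in an identity $bX_1=\sum_j g_jf_j$ (the constant terms of the $g_j$ lie in $A$) that the linear coefficients $b_{jk}$ of the $f_j$ generate $B$ as an $A$--module. You instead pass to the conormal module $J'/J'^2$: since $J'\cdot (J'/J'^2)=0$, its module structure factors through $R/J'\cong A$, and it is visibly $B^n$ as an $A$--module; Noetherianity of $R$ makes $J'$ a finitely generated ideal, hence $B^n$, and then $B$ (a direct summand), a finitely generated $A$--module. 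The underlying information extracted is the same --- the degree-one part carries a copy of $B$ --- but your packaging is more structural: it replaces the paper's coefficient chase by a one-line module-theoretic reduction and treats the polynomial and power series cases by literally the same computation, whereas the paper's version is more elementary, needing nothing beyond comparison of coefficients. Two small points to polish in yours, both notational rather than mathematical: the squares $J'^2$, $J''^2$ are indeed unambiguous (an ideal product is the additive group generated by pairwise products, so it is independent of the ambient ring, as you say); and in the power series case $\mathfrak{m}$ should be re-declared as the ideal $(X_1,\z,X_n)$ of $B[\![{\boldsymbol X}]\!]$ rather than reused from the polynomial case, so that $J''=\mathfrak{m}$ and $J''/J''^2\cong B^n$ read correctly.
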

\begin{proof}  (iii)$\Rightarrow$(i, ii). With the notations of Example \ref{significativo}, recall that $A+{\boldsymbol X}B[{\boldsymbol X}]$ is isomorphic to $A\!\Join^{\sigma'} \!\! {\boldsymbol X}B[{\boldsymbol X}]$ (and $A+{\boldsymbol X}B[\![{\boldsymbol X}]\!]$ is isomorphic to $A\!\Join^{\sigma''}\!\! {\boldsymbol X}B[\![{\boldsymbol X}]\!]$).
Since we have the following canonical isomorphisms
$$
 \dfrac{B[{\boldsymbol X}]}{{\boldsymbol X}B[{\boldsymbol X}]}\cong B \cong \dfrac{B[\![{\boldsymbol X}]\!]}{{\boldsymbol X}B[\![{\boldsymbol X}]\!]}\,,
$$
in the present situation, the homomophism ${\breve{\sigma}'}: A \hookrightarrow B[{\boldsymbol X}]/{\boldsymbol
X}B[{\boldsymbol X}]$ (or, ${\breve{\sigma}''}:A \hookrightarrow B[\![{\boldsymbol X}]\!]/{\boldsymbol
X}B[\![{\boldsymbol X}]\!]$)  is finite.  Hence, statements (i) and (ii) follow easily from  Proposition
\ref{finite}.

 (i) (or, (ii)) $\Rightarrow$ (iii).
Assume that  $A+{\boldsymbol X}B[{\boldsymbol X}]$ (or, $A+{\boldsymbol X}B[\![{\boldsymbol X}]\!]$) is a
Noetherian ring.  By Proposition \ref{noet}, or by the isomorphism  $(A+{\boldsymbol X}B[{\boldsymbol
X}])/{\boldsymbol X}B[{\boldsymbol X}] \cong A$ (respectively $(A+{\boldsymbol X}B[\![{\boldsymbol
X}]\!])/{\boldsymbol X}B[\![{\boldsymbol X}]\!] \cong A$),  we deduce that $A$ is also a Noetherian ring.
Moreover,  by assumption, the ideal $I$ of $A+{\boldsymbol X}B[{\boldsymbol X}]$ (respectively, of $A+{\boldsymbol
X}B[\![{\boldsymbol X}]\!]$) generated by the set $\{bX_k \mid b\in B,1\leq k\leq n\}$ is finitely generated.
Hence $I=(f_1,f_2,  \z,  f_m)$,  for some $f_1,f_2, \z, f_m\in I$. Let $\{b_{jk}\mid 1\leq k\leq n\}$ be the set
of coefficients of linear monomials of the polynomial (respectively, power series) $f_j$,  $1 \leq j \leq m$. It
is easy to verify that $\{b_{jk} \mid 1\leq j\leq m,1\leq k\leq n\}$ generates $B$ as $A$--module; thus $A
\subseteq B$ is a finite ring extension.\end{proof}

\begin{remark}
Let $A\subseteq B$  be a ring extension, and let $X$ be  an  indeterminate over $B$. Note that the ideal $J' =XB[X]$ of $B[X]$ is never  finitely genera\-ted as an $A$--module (with the structure induced by the inclusion $\sigma':A\hookrightarrow B[X]$).   As a matter of fact, assume that $\{g_1,g_2, \z,g_r\} \ (\subset B[X])$  is a set of generators of   $J'$   as $A-$module and set $N:=\max\{{\rm deg}({{g_i}}) \mid i=1,2, \z,r\}$. Clearly,  we have $X^{N+1}\in  J'\setminus\sum_{i=1}^rAg_i$,  which is a contradiction.
Therefore, the previous observation shows that the Noetherianity of the ring $\da$ does not imply that $J$ is  finitely generated as an $A$--module  (with the structure induced by $f$); for instance $\mathbb R+X \mathbb C[X] \ (\cong \mathbb R\!\Join^{\sigma'}\!\!X\mathbb C [X]$, where $\sigma': \mathbb R\hookrightarrow \mathbb C[X]$ is the natural embedding) is a Noetherian ring (Proposition \ref{noegenerale}),  but $X \mathbb C[X]$ is not finitely generated as an $\mathbb R$--vector space.  This fact shows that condition (a) (or, equivalently, (b) or (c)) of Proposition \ref{noepiubello} is not necessary for the Noetherianity of $\da$.
\end{remark}
\begin{example}\label{noeancorapiugenerale} Let $A \subseteq B$ be a ring extension, $J$ an ideal of $B$ and ${\boldsymbol X}:=\{X_1,\z,X_r\}$ a finite set of intederminates over $B$. We set $B':=B[{\boldsymbol X}]$,   $J':= {\boldsymbol X}J[{\boldsymbol X}]$ and we denote by $\sigma'$ the canonical embedding of $A$ into $B'$.  By a routine argument, it is easy to see that 
 the ring $A\Join^{\sigma'} \!\!J'$ is naturally isomorphic to the ring $A+{\boldsymbol X}J[{\boldsymbol X}]$.  
Now, we want to show that, in this case, we can characterize the Noetherianity of the ring $A+{\boldsymbol X}J[{\boldsymbol X}]$, without assuming a finiteness
 condition on the inclusion $A \subseteq B$ (as in  Corollary \ref{noegenerale} (iii)) or on the inclusion $A+{\boldsymbol X}J[{\boldsymbol X}] \subseteq B[{\boldsymbol X}]$. More precisely,  
 \it  the following conditions are equivalent.
\begin{enumerate}
\item[\rm (i)] $A+{\boldsymbol X}J[{\boldsymbol X}]$ is a Noetherian ring.
\item[\rm (ii)] $A$ is a Noetherian ring, $J$ is an idempotent ideal of $B$ and it is finitely generated as an $A$--module.
\end{enumerate} \rm 
(i)$\Rightarrow$(ii). Assume that $R:=A+{\boldsymbol X}J[{\boldsymbol X}] = A+J'$ is a Noetherian ring. Then, clearly,  $A$ is  Noetherian, since $A$ is canonically isomorphic to  $R/J'$.
Now, consider the ideal $L$ of $R$ generated by the set  of linear monomials $\{bX_i\mid 1\leq i \leq r,\ b\in J \}$. By assumption, we can find   $\ell_1,\ell_2, \z, \ell_t\in L$ such that $L  = \sum_{k=1}^t \ell_k R$.   Note  that $\ell_k(0, 0, \z,0)= 0$, for all $k $, $1\leq k \leq t$.  If we denote by $b_k$ the coefficient of the monomial $X_1$ in the polynomial $\ell_k$,  then it is easy to see that $\{b_1,b_2, \z,b_t\}$ is a set of generators of $J$ as an  $A$--module.

The next step is to show that $J$ is an idempotent ideal of $B$. By assumption,  $J'$ is a \ec~finitely generated ideal of $R$. Let  $$g_h:=  \sum_{j_1+\z+j_r=1}^{m_h}c_{h, j_1\z j_r} X_1^{j_1}\cdots X_r^{j_r}, \mbox{ with }  h=1,2, \z, s, $$
 be a finite set of generators of $J'$ in $R$.
Set $\overline{j_1}:=\max\{j_1 \mid c_{h,j_10\z 0} \neq 0, \mbox{ for } 1\leq h  \leq s\}$. Take now an arbitrary element  $b\in J$ and consider the monomial $bX_1^{\overline{j_1}+1}\in J'$. Clearly, we have 
$$
bX_1^{\overline{j_1}+1}= \sum_{h=1}^s f_hg_h, \, \mbox{with } f_h:= \sum_{e_1+\z+e_r=0}^{n_h}d_{h, e_1\z e_r} X_1^{e_1}\cdots X_r^{e_r} \in R\,.
$$
 Therefore,  
 $$
 b=  \sum_{h=1}^s \;  \sum_{\; j_1+e_1=\overline{j_1}+1} c_{h, j_10\z 0} d_{h, e_10\z 0} \,.
 $$ 
 Since $j_1< \overline{j_1}+1$, we have necessarily that $e_1\geq 1$. Henceforth $f_h$ belongs to $J'$ and so $d_{h, e_10\z 0} \in  J$, for all $h$,  $1\leq h \leq s$. This proves that $b\in J^2$. 
 
(ii)$\Rightarrow$(i).  In this situation, by Nakayama's lemma, we easily deduce that $ J= eB$, for some idempotent element $e\in J$.   Let $\{b_1,\z,b_s\}$ be a set of generators of $J$ as an $A$--module, i.e., $J= eB = \sum_{1 \leq h \leq s}b_hA$.
We consider a new set of indeterminates over $B$ (and $A$) and precisely  ${\boldsymbol Y}:=
\{Y_{ih}\mid 1\leq i \leq r,  1\leq h \leq s\}$.  We can define a map $\varphi:A[{\boldsymbol X}, {\boldsymbol Y}]\rightarrow B[{\boldsymbol X}]$ by setting $\varphi(X_i) := eX_i$, and $\varphi(Y_{ih}) := b_hX_i$, for all $ i=1,\z,r,\ h=1,\z, s$.
It is easy to see that $\varphi$ is a ring homomorphism and Im$(\varphi) \subseteq R \ ( = A +{\boldsymbol X}J[{\boldsymbol X}]) $.  Conversely, let 
$$
f:=a+ \sum_{i=1}^r\left(\sum_{e_{i_1}+\z+e_{i_r}=0}^{n_i}c_{i, e_{i_1}\z e_{i_r}}X_1^{e_{i_1}}\cdots X_r^{e_{i_r}}\right)\!\!X_i\in R \; (\mbox{and so } c_{i, e_{i_1}\z e_{i_r}} \in J)\,.
$$ 
Since $J = \sum_{1 \leq h \leq s}b_hA$, then for all $i=1,\z, r$ and $e_{i_1},\z ,e_{i_r}$, with $e_{i_1}+\z+e_{i_r}\in\{0,\z, n_i\}$,  we can find  elements 
$a_{i, e_{i_1}\z e_{i_r}, h} \in A$, with $1 \leq h \leq s$,  such that $c_{i, e_{i_1}\z e_{i_r}}= \sum_{h=1}^s
a_{i, e_{i_1}\z e_{i_r}, h}b_h$.  Consider the polynomial
$$
g:=a+ \sum_{i=1}^r\; \sum_{h=1}^s \; \sum_{e_{i_1}+\z+e_{i_r}=0}^{n_i} a_{i, e_{i_1}\z e_{i_r}, h}X_1^{e_{i_1}}\cdots X_r^{e_{i_r}}Y_{ih} \in A[{\boldsymbol X}, {\boldsymbol Y}].
$$
It is straightforward to see that $\varphi(g)=f$ and so  Im$(\varphi) = R$. By Hilbert Basis Theorem, we conclude easily that $R$ is Noetherian.
\end{example}

\begin{remark} We preserve the { notation} of Example \ref{noeancorapiugenerale}.

{ (1)}  Note that in the previous example, when $J=B$, we reobtain Corollary \ref{noegenerale} ((i)$\Leftrightarrow$(iii)).  If $B=A$ and $I$ is an ideal of of $A$, then we simply have that   $A+{\boldsymbol X}
I[{\boldsymbol X}]$ \emph{is a Noetherian ring if and only if  $A$ is a Noetherian ring and $I$ is an idempotent ideal of $A$}.   Note the previous two cases were studied as separate cases by S. Hizem, who announced similar results in her talk at the Fez Conference in June 2008, { presenting an ample and systematic study of the transfer of various finiteness conditions  in the constructions  $A+{\boldsymbol X}I[{\boldsymbol X}]$ and $A+{\boldsymbol X}B[{\boldsymbol X}]$. } 

{ (2)} The Noetherianity of $B$ it is not a necessary condition for the Noetherianity of the ring $A+{\boldsymbol X}J[{\boldsymbol X}]$. For instance, take $A$ any field, $B$ the product of infinitely many copies of $A$, so that we can consider $A$ as a subring of $B$, via the diagonal ring embedding $a\mapsto (a,a,\z)$,  $a\in A$. Set $J:=(1,0,\z)B$. Then $J$ is an idempotent ideal of $B$ and, { at the same time, } a cyclic $A$-module. Thus, by Example \ref{noeancorapiugenerale}, $A+{\boldsymbol X}J[{\boldsymbol X}]$ is a Noetherian ring. Obviously, $B$ is not Noetherian. 

{ (3)}
Note that, 
 if $A+{\boldsymbol X}J[{\boldsymbol X}]$ is  Noetherian  and $B$ is not Noetherian, then 
$A \subseteq B$ and $A+{\boldsymbol X}J[{\boldsymbol X}] \subseteq B[{\boldsymbol X}]$ are necessa\-ri\-ly not finite. Moreover, it is easy to see that $A+{\boldsymbol X}J[{\boldsymbol X}] \subseteq B[{\boldsymbol X}]$ is a finite extension if and only if  the canonical homomorphism $A \hookrightarrow B[{\boldsymbol X}]/({\boldsymbol X}J[{\boldsymbol X}])$ is finite. Finally, it can be shown that last condition holds if and only if $J=B$ and $A\subseteq B$ is finite. 
\end{remark}


                              %

\end{document}